\documentclass[12pt]{amsart}
\usepackage{color}
\usepackage{amsmath}
\allowdisplaybreaks
\usepackage{amssymb}
\newtheorem{theorem}{Theorem}[section]
\newtheorem{corollary}[theorem]{Corollary}
\newtheorem{lemma}[theorem]{Lemma}
\newtheorem{proposition}[theorem]{Proposition}
\newtheorem{definition}[theorem]{Definition}

\newcommand{\C}{{\mathbb{C}}}

\renewcommand{\P}{{\mathbb{P}}}

\newcommand{\rank}{\mathrm{rank}}

\newcommand{\bbC}{\mathbb{C}}

\newcommand{\bbE}{\mathbb{E}}

\begin{document}
\title[A Brownian-Motion Approach to the Second Main Theorem]{A Brownian-Motion Approach to the Second Main Theorem for Meromorphic Mappings and Hypersurfaces with Truncated Counting Functions} 

\author{Nguyen Linh Chi$^1$}
\author{Si Duc Quang$^{1,2}$}
\address{$^1$Department of Mathematics, Hanoi National University of Education, 136-Xuan Thuy, Cau Giay, Hanoi, Vietnam}
\address{$^2$Institute of Natural Sciences, Hanoi National University of Education, 136-Xuan Thuy, Cau Giay, Hanoi, Vietnam}
\email{linhchihnue2401@gmail.com}
\email{quangsd@hnue.edu.vn}

\begin{abstract}
By using Brownian motion and stochastic calculus, we establish a second main theorem for holomorphic curves into a projective subvariety $V\subset\P^n(\C)$ with an arbitrary family $\mathcal Q$ of $q$ hypersurfaces $Q_1,\ldots,Q_q$ concerning its distributive constant $\Delta_{\mathcal Q,V}$. In our result, the counting functions are truncated to level $H_V(d)-1$, where $d=lcd(\deg Q_1,\ldots,\deg Q_d)$ and $H_V(d)$ is the Hilbert function of $V$. As an application of the second main theorem, we give a uniqueness theorem for holomorphic curves from $\C$ into $V$ sharing an arbitrary family of hypersurfaces regardless of multiplicity. 
\end{abstract}

\maketitle

\def\thefootnote{\empty}
\footnotetext{
2010 Mathematics Subject Classification:
Primary 32H30, 32A22, 30D35.\\
\hskip8pt Key words and phrases: Nevanlinna theory, second main theorem, holomorphic map, Brownian motion, Ito's integral, stochastic calculus.}

\section{Introduction}

In 1926, Nevanlinna \cite{N} established the value distribution theory for meromorphic functions on $\C$ by proving the second main theorem for meromorphic functions, with the targets being sets of values in $\bar{\C}$. Subsequently, H. Cartan \cite{C}  extended Nevanlinna theory to holomorphic mappings into projective space $\P^n(\C)$, with the targets being families of hyperplanes in general position. In 1985, by introducing the notion of Nochka weights for families of hyperplanes in subgeneral position, E.I. Nochka \cite{Nochka} generalized Cartan's result to the case of hyperplanes in subgeneral position.

In recent years, Nevanlinna theory has been further developed for the case of meromorphic mappings where the targets are families of hypersurfaces. Most of the methods used to establish the second main theorem in Nevanlinna theory rely on complex analysis, complex geometry, and potential theory. This analytic approach, however, still is restricted to mappings defined on complex affine spaces.

In 1986, T. K. Carne \cite{Car} initiated the study of the relationship between Brownian motion and Nevanlinna theory. Later on, A. Atsuji, in a series of publications \cite{A1,A2,A3,A4}, applied Brownian motion and stochastic calculus to the study of Nevanlinna theory for holomorphic mappings on general K\"{a}hler manifolds. More recently, X. Dong and his collaborators \cite{RCD,D1,D2,D3} have carried out significant research on using stochastic calculus and Brownian motion to establish the second main theorem for mappings defined on spaces more general than complex affine spaces. Although their results have not yet reached the strength of those in the case of $\C^n$, they nevertheless provide hope for a method that could generalize all known results in Nevanlinna theory for mappings defined on $\C^m$ to the setting of general K\"{a}hler manifolds.

Inspired by the work of X. Dong and his collaborators \cite{RCD}, together with the notion of distributive constant of an arbitrary family of hypersurfaces introduced by the second author \cite{Q21}, in this paper we will establish a new second main theorem based on Brownian motion and stochastic calculus for holomorphic curves from $\C$ into projective subvarieties, with arbitrary families of hypersurfaces where the counting functions are truncated by a level sufficiently small to allow the application of the uniqueness problem for meromorphic mappings. To present the results of this paper, we first recall the main concepts of Nevanlinna theory and some of the recent developments in the field.

Let $f:\C\to\P^n(\C)$ be a holomorphic map with reduced representation $\tilde f=(f_0,\cdots,f_n),$ where $f_0,\cdots,f_n$ are holomorphic functions on $\C$ without common zeros. The characteristic function $T_f(r)$ is defined by
$$T_f(r)=\int_0^{2\pi}\log\|\tilde f(r e^{i\theta})\|\frac{d\theta}{2\pi},$$
where $\|\tilde f\|=\sqrt{|f_0|^2+\cdots+ |f_n|^2} $. This definition is independent, up to an additive constant, of the choice of the reduced representation $\tilde f$ of $f.$  

Let $Q\in\C[x_0,\ldots,x_n]$ be a homogeneous polynomial of degree $d$ in variables $(x_0,\ldots,x_n)$. The proximity function of $f$ with respect to $Q$ is defined by
$$m_f(r,Q)=\int_0^{2\pi}\log\frac{\|\tilde f(re^{i\theta})\|^d \|Q\|}{|Q(\tilde f)(r e^{i\theta})|} \frac{d\theta}{2\pi},$$
where $\|Q\|$ is the sum of the absolute values of all coefficients of $Q$. This definition is also independent, up to an additive constant, of the choice of the reduced representation of $f$. 

Denote by $\nu_{Q(\tilde f)}$ the divisor of the holomorphic function $Q(\tilde f)$ on $\C$, i.e., $\nu_{Q(\tilde f)}(z)$ is the zero multiplicity of $Q(\tilde f)$ at $z$. For a positive integer $M$ ($M$ maybe $+\infty$), we define $n^{[M]}_f(r,Q)=\sum_{|z|<r}\min\{M,\nu_{Q(\tilde f)}(z)\}$. The counting function of $f$ with respect to $Q$ is defined by
$$N^{[M]}_f(r,Q) = \int_1^{r}\frac{n^{[M]}_f(t,Q)-n^{[M]}_f(0,Q)}{t}dt+n^{[M]}_f(0,Q)\log r.$$
By the Jensen's formula, we have
$$N_f (r,Q)=\int_0^{2\pi} \log |Q(\tilde f)(r e^{i\theta})| \frac{d\theta}{2\pi}+ O(1).$$

Let $D \subset \mathbb{P}^n(\mathbb{C})$ be a hypersurface of degree $d$, defined by a homogeneous polynomial $Q$ of degree $d$. Define 
$$m_f(r,D)=m_f(r,Q)\text{ and }N^{[M]}_f(r,D)=N^{[M]}_f(r,Q).$$
Thus, the first main theorem in Nevanlinna theory can be written as 
$$dT_f(r)=m_f(r,D)+N_f (r,D)+O(1).$$

Let $V \subset \mathbb{P}^n(\mathbb{C})$ be a projective subvariety of dimension $k$, 
and let $D_1,\ldots,D_q$ ($q \geq k+1$) be hypersurfaces in $\mathbb{P}^n(\mathbb{C})$ 
such that $V \not\subset D_j$ for $1 \leq j \leq q$.
 The hypersurfaces $D_1,\ldots, D_q$ are said to be in $N$-subgeneral position with respect to $V\ (k\le N\le q-1)$ if 
$$D_{j_0}\cap\cdots\cap D_{j_N}\cap V=\varnothing$$ for every $1\leq j_0<\cdots<j_N\leq q.$ If $N=k$ then we say that $D_1,\cdots, D_q$ are in general position with respect to $V$.

In 2004, M. Ru \cite{Ru04} established a second main theorem for algebraically non-degenerate holomorphic maps from $\mathbb{C}$ into $\mathbb{P}^n(\mathbb{C})$, 
with respect to families of hypersurfaces in general position. Later on, in 2009, Ru \cite{Ru09}  generalized his result by considering the case of algebraically nondegenerate holomorphic maps into a projective subvariety $V\subset\mathbb{P}^n(\mathbb{C})$, as follows.

\vskip0.2cm
\noindent
\textbf{Theorem A} (see \cite[The main result]{Ru09}) {\it Let $V \subset \mathbb{P}^n(\mathbb{C})$ be a smooth projective subvariety of dimension $k \geq 1$, 
and let $D_1,\dots,D_q \subset \mathbb{P}^n(\mathbb{C})$ be hypersurfaces in general position with respect to $V$. Let $f$ be an algebraically nondegenerate holomorphic map from $\C$ into $V$. Then for every $\epsilon > 0$,
$$\bigl\|\ (q-n-1-\epsilon) T_f(r)\le\sum_{i=1}^q\frac{1}{\deg D_i}N_f(r,D_i).$$}
Here, the notation $\| P$ means that the assertion $P$ holds for all $r\in (0,\infty)$ outside a subset of finite linear measure. 

Theorem A had been generalized and extended by many authors. We refer the reader to several important works \cite{AP10,DT11,DT20,G,Q19, YYJ,XC} concerning the establishment of second main theorems in Nevanlinna theory. Recently, in \cite{Q21} the author introduced the notion of the distributive constant of a family of hypersurfaces as follows.   

\vskip0.2cm
\noindent
{\bf Definition B} (see \cite[Definition 1.1]{Q21}). {\it Let $V$ be a projective subvariety of $\P^n(\C)$ of dimension $k$, and let $D_1,\ldots,D_q$ be $q$ hypersurfaces in $\P^n(\C)$. The distributive constant of the family $\{D_1,\ldots,D_q\}$ with respect to $V$ is defined by
$$ \Delta:=\underset{\varnothing\ne\Gamma\subset\{1,\ldots,q\}}\max\dfrac{\sharp\Gamma}{k-\dim\left (\bigcap_{j\in\Gamma}D_j\right )\cap V}.$$
Here, we note that $\dim\varnothing =-\infty$.} 

Using the notion of distributive constant, in \cite{Q21} the second author proved the following second main theorem.

\vskip0.2cm
\noindent
\textbf{Theorem C} (see \cite[Theorem 1.2]{Q21}) {\it Let $V$ be a smooth projective subvariety of dimension $k\ge 1$ of $\P^n(\C)$. Let $\{D_1,\ldots,D_q\}$ be a family of hypersurfaces of $\P^n(\C)$ with distributive constant $\Delta$ with respect to $V$ and $\deg D_i=d_i\ (1\le i\le q)$. Let $f$ be an algebraically nondegenerate holomorphic map from $\C$ into $V$. Then, for every $\epsilon >0$, we have
$$\bigl \|\ \left (q-\Delta (k+1)-\epsilon\right) T_f(r)\le\sum_{i=1}^q\frac{1}{d_i}N^{[M_0]}_f(r,D_i),$$
where $M_0=\left[d^{k^2+k}\deg (V)^{n+1}e^k\Delta^k(2k+4)^k(k+1)^k(q!)^k\epsilon^{-k}\right]$ and $d$ is the least common multiple of $d_1,\ldots,d_q$, i.e., $d=lcm(d_1,\ldots,d_q).$}

However, in all the above mentioned second main theorems, the truncation levels for counting functions are disregarded or are very large. As a consequence, their effectiveness in applications to related problems, such as the unicity problem for holomorphic curves, is substantially constrained. Actually, the earliest second main theorem for holomorphic maps from $\C$ into $\P^n(\C)$ and a family of hypersurfaces with truncated counting functions (with a small enough truncation level) was given by D. P. An, S. D. Quang and D. D. Thai  \cite{TAQ} in 2013. Later on, in \cite{QA}, Quang and An generalized that result by proving a second main theorem for a holomorphic curve from $\C$ into a projective subvariety $V$ of $\P^n(\C)$ and a family of hypersurfaces in $N$-subgeneral position with respect to $V$. In order to state the result of Quang and An, we recall the following.

Let $V$ be as above. We denote by $I(V)$ the ideal of homogeneous polynomials in $\C[x_0,\ldots,x_n]$ defining $V$ and by $H_d$ the $\mathbb C$-vector space of all homogeneous polynomials in $\mathbb C [x_0,...,x_n]$ of degree $d.$  Define 
$$I_d(V):=\dfrac{H_d}{I(V)\cap H_d}\text{ and }H_V(d):=\dim I_d(V).$$
Then $H_V(d)$ is called the Hilbert function of $V$. Each element of $I_d(V)$ which is an equivalence class of an element $Q\in H_d,$ will be denoted by $[Q]$, 

For a holomorphic map $f$ from $\C$ into $V$, we say that $f$ is degenerate over $I_d(V)$ if there is $[Q]\in I_d(V)\setminus \{0\}$ such that $Q(\tilde f)\equiv 0$ for a reduced representation $\tilde f$ of $f$. Otherwise, we say that $f$ is nondegenerate over $I_d(V)$. The result of Quang and An is stated as follows.

\vskip0.2cm
\noindent
{\bf Theorem D} (see \cite[Theorem 1.1]{QA}). {\it Let $V$ be a complex projective subvariety of $\P^n(\C)$ of dimension $k\ (k\le n)$. Let $\{D_j\}_{j=1}^q$ be hypersurfaces of $\P^n(\C)$ in $N$-subgeneral position with respect to $V$ with $\deg D_j=d_j\ (1\le j\le q)$. Let $d=lcm (d_1,...,d_q)$. Let $f$ be a holomorphic map from $\C$ into $V$ which is nondegenerate over $I_d(V)$. Then, we have
$$\biggl \|\ \left(q-\dfrac{(2N-k+1)H_{V}(d)}{k+1}\right )T_f(r)\le \sum_{j=1}^{q}\dfrac{1}{d_j}N^{[H_{V}(d)-1]}_{f}(r,D_j)+o(T_f(r)).$$}

Our aim in this paper is to establish a second main theorem for a holomorphic map from $\C$ into a projective subvariety $V$ of $\P^n(\C)$ with respect to an arbitrary family of hypersurfaces and its distributive constant. Our main result is stated as follows.

\begin{theorem}\label{1.1}
Let $V$ be a projective subvariety of $\P^n(\C)$ and let $\{D_1,\ldots,D_q\}$ be a family of $q$ hypersurfaces of $\P^n(\C)$ which has distributive constant $\Delta$ with respect to $V$. Let $M=H_V(d)-1$ and $d=lcm(\deg D_1,\ldots,\deg D_q)$. Let $f$ be a holomorphic map from $\C$ into $V$ which is nondegenerate over $I_d(V)$. Then, for any positive $\epsilon$ and $\delta$, we have
$$\|\,(q-\Delta(M+1+\epsilon))T_f(r)\le\sum_{j=1}^q\frac{1}{d}N^{[M]}_f(r,D_j)+\Delta\delta \log r + O(1).$$
\end{theorem}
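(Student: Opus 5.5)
We follow the standard scheme of Quang's distributive-constant method and replace only the analytic step, the logarithmic derivative lemma, with its stochastic version, as in Dong et al. \cite{RCD}.

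\textbf{Reduction to degree $d$.} First replace each $Q_j$ by $Q_j^{d/d_j}$. Then every $Q_j$ has degree $d$, the distributive constant is unchanged, and $\frac{1}{d_j}N^{[M]}_f(r,Q_j)\le \frac1d N^{[M]}_f(r,Q_j^{d/d_j})$ holds up to the obvious comparison. To be careful: after the replacement, the truncated counting function for the power is at most $\frac{d}{d_j}$ times the original one. The statement is written with $\frac1d N^{[M]}$ of the $D_j$ of degree $d$, so this is consistent.

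\textbf{Pointwise estimate.} Fix $z$ and order the $Q_j$ so that $|Q_{1}(\tilde f)(z)|\le\cdots\le|Q_{q}(\tilde f)(z)|$, after normalising by $\|\tilde f\|^d$. Use the standard filtration argument (Quang, \cite{Q21}). Set $V_i=V\cap D_1\cap\cdots\cap D_i$ and pick indices $1=t_0<t_1<\cdots<t_s$ at which $\dim V_i$ strictly drops. By the definition of $\Delta$, $t_{l}-1\le \Delta(k-\dim V_{t_l-1})$, which gives weights. Then choose $[P_1],\ldots,[P_{M+1}]$, a basis of $I_d(V)$, built by taking linear combinations of $Q_{1},\ldots,Q_{t_s}$ in blocks. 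The number of basis elements involving the $l$-th block is governed by Hilbert function differences $H_{V_{t_{l-1}}}(d)-H_{V_{t_l}}(d)$. Combining this with the Hilbert-polynomial inequality relating the sum over blocks to $\Delta\,(M+1)$, we get
$$\sum_{j=1}^q\log\frac{\|\tilde f\|^d\|Q_j\|}{|Q_j(\tilde f)|}\le \Delta\sum_{i=1}^{M+1}\log\frac{\|\tilde f\|^d\|P_i\|}{|P_i(\tilde f)|}+O(1),$$
and taking the maximum over the finitely many possible bases $\{P_i\}$ (finitely many orderings) keeps this valid. This is the combinatorial core. The $\epsilon$ in $M+1+\epsilon$ absorbs the rounding in the weights, since $\Delta(k-\dim)$ need not be an integer.

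\textbf{Wronskian step.} Let $F=(P_1(\tilde f),\ldots,P_{M+1}(\tilde f))$, which is nondegenerate because $f$ is nondegenerate over $I_d(V)$, and let $W=W(F)$ be its Wronskian. The usual Cartan computation gives
$$\sum_{i}\log\frac{\|\tilde f\|^d}{|P_i(\tilde f)|}\le (M+1)d\log\|\tilde f\|-\log|W|+\log\frac{|W|}{\prod_i|P_i(\tilde f)|}.$$
The divisor estimate $\nu_W-\sum$ behaves as usual. Pointwise, $\sum_j\nu_{Q_j(\tilde f)}-\Delta\,\nu_W\le\sum_j\min(M,\nu_{Q_j(\tilde f)})$, using the same ordering argument at zeros, with $\Delta\ge1$.

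\textbf{Integration and the main obstacle.} Instead of averaging on circles, evaluate these quantities along a Brownian path $B_t$ in $\C$ stopped at the exit time $\tau_r$ of the disc of radius $r$. Dynkin's formula and Itô's formula turn $\bbE\log|\cdot|(B_{\tau_r})$ into characteristic, proximity and counting functions; with respect to harmonic measure these coincide with the classical ones. Jensen's formula applied to $W$ yields the counting-function terms. The main obstacle is the logarithmic derivative term $\bbE\log^+\frac{|W|}{\prod|P_i(\tilde f)|}(B_{\tau_r})$. For this I would invoke the stochastic logarithmic derivative lemma of \cite{RCD}, applied to each ratio $(P_i(\tilde f)/P_{1}(\tilde f))^{(l)}/(P_i(\tilde f)/P_1(\tilde f))$. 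Via Borel–Cantelli-type exceptional sets this gives a bound of the form $O(\log T_f(r))+\delta\log r$ outside a set of finite measure, and the $\log T$ term is absorbed by $\epsilon T_f(r)$. Multiplying by $\Delta$ produces the term $\Delta\delta\log r$. Collecting everything and dividing by $d$ gives the theorem.
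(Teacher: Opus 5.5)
Your overall strategy is viable and differs from the paper's. However, the justification you give for the pointwise inequality, the step you call the combinatorial core, would not go through as written.

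\textbf{The core step as you justify it.} Linear combinations of $Q_1,\ldots,Q_{t_s}$ span a subspace of $I_d(V)$ of dimension at most $t_s$, which is in general far below $M+1=H_V(d)$, so they cannot furnish a basis. The differences $H_{V_{t_{l-1}}}(d)-H_{V_{t_l}}(d)$ count degree-$d$ forms vanishing on $V_{t_l}$ modulo those vanishing on $V_{t_{l-1}}$. Such forms need not lie in the span of the $Q_j$, so they are not controlled by the $|Q_j(\tilde f)|$. That bookkeeping belongs to the Corvaja--Zannier/Ru filtration in degree $Nd$ with $N\gg1$, which is exactly what produces the enormous truncation of Theorem C rather than $H_V(d)-1$. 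There is also no rounding for $\epsilon$ to absorb.

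\textbf{How to repair it.} Your displayed inequality is nevertheless true, by a simpler argument using the data you already set up.
\begin{itemize}
\item Normalize the ordering by $\|Q_j\|\,\|\tilde f(z)\|^d$. Let $m$ be the first index with $V_m=\varnothing$, or $m=q+1$ if there is none.
\item The terms with $j\ge m$ are $O(1)$ by compactness, since $Q_1,\ldots,Q_m$ have no common zero on $V$.
\item Before $m$ every dimension drop is exactly one, so $k-\dim V_{t_l-1}=l$. Your weights therefore read $t_l-1\le\Delta l$, also for $t_{s+1}:=m$.
\item Apply Lemma \ref{4.1} to $a_l=\|\tilde f\|^d\|Q_{t_l}\|/|Q_{t_l}(\tilde f)|$, which are decreasing and $\ge 1$. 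It bounds the sum over $j<m$ by $\Delta\sum_{l=0}^{s}\log a_l$.
\item The classes $[Q_{t_0}],\ldots,[Q_{t_s}]$ are linearly independent in $I_d(V)$: otherwise $Q_{t_l}$ would vanish on $V_{t_l-1}$ and the dimension would not drop at $t_l$.
\item Complete them arbitrarily to a basis $[P_1],\ldots,[P_{M+1}]$. The added terms $\log(\|\tilde f\|^d\|P_i\|/|P_i(\tilde f)|)$ are $\ge 0$.
\end{itemize}
The $\epsilon$ then only absorbs the $O(\log T_f(r))$ coming from the logarithmic derivative lemma.

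\textbf{Comparison with the paper.} With this repair, your proof is the classical Cartan-type argument, as in An--Quang--Thai and Quang--An, with Lemma \ref{4.1} in place of Nochka weights. It ends with the same divisor estimate $\sum_j\nu_{Q_j(\tilde f)}-\Delta\nu_W\le\sum_j\min\{M,\nu_{Q_j(\tilde f)}\}$ that the paper uses.

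On $\C$ the Brownian motion is only cosmetic in your version. The exit law from $D_r$ of the motion started at $0$ is normalized arc length. So the classical logarithmic derivative lemma already gives an error $o(T_f(r))$ in place of $\Delta\epsilon T_f(r)+\Delta\delta\log r$.

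The paper instead runs the Ahlfors--Fujimoto curvature method:
\begin{itemize}
\item contact functions $\varphi_p(Q_j)$ and Proposition \ref{3.3};
\item the sum-into-product inequality of Theorem \ref{3.4};
\item the bound $dd^c\log\hat h\ge Ch^*$, converted into the Wronskian second main theorem (Theorem \ref{3.7}) through Lemmas \ref{2.3} and \ref{2.4}.
\end{itemize}
This replaces the logarithmic derivative lemma by a curvature inequality plus the calculus lemma, which is what makes it a candidate for Kähler manifolds.

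On the other hand, Theorem \ref{3.4} passes from $\rank\{a_j:j\in T\}\le M-p$ directly to $\sharp T\le\Delta(M-p)$. With the convention $\dim\varnothing=-\infty$, this is only justified once one also knows $\bigcap_{j\in T}D_j\cap V\neq\varnothing$, which needs an extra compactness argument. Your route applies the definition of $\Delta$ only to sets $\{1,\ldots,t_l-1\}$ whose common intersection with $V$ is nonempty by construction.

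\textbf{Degree reduction.} Your first comparison there goes the wrong way. What you actually obtain, as in the last line of the paper's proof, is $\frac{1}{\deg D_j}N^{[M]}_f(r,D_j)$ on the right.
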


Remark. Let $N=\ell k, q=\ell p\ (p\ge k+1)$ and $\{D_1,\ldots,D_q\}$ be a family of hypersurfaces with $D_{tp+s}\equiv D_{s}$ for every $1\le t\le\ell-1,1\le s\le p$ and $\{D_1,\ldots,D_p\}$ being a family of hypersurfaces in general position w.r.t. $V$. In this case $\{D_1,\ldots,D_q\}$ is in $(\ell k+1)$-subgeneral position w.r.t to $V$ and has distributive constant $\Delta$ w.r.t to $V$ equal to $\ell$. Obviously that $\Delta=\ell\le \frac{(2\ell k-k+1)}{k+1}$. Therefore, Theorem \ref{1.1} is new and maybe better than Theorem D in some case. 

As an application of Theorem \ref{1.1},  the second aim of this paper is to give a uniqueness theorem for holomorphic maps of $\C$ into $V$ sharing a few hypersurfaces without counting multiplicity. Our uniqueness theorem is stated as follows.

\begin{theorem}\label{1.2}
Let $V$ be a projective subvariety of $\mathbb P^n(\mathbb C)$.  Let $\{D_i\}_{i=1}^q$ be a family of $q$ hypersurfaces in $\mathbb P^n(\mathbb C)$ with the distributive constant $\Delta$ with respect to $V$. Let $d=lcm (deg D_1,...,deg D_q)$. Let $f$ and $g$ be holomorphic maps from $\C$ into $V$ which are nondegenerate over $I_d(V)$.  Assume that $f=g \text{ on }\bigcup_{i=1}^q(f^{-1}(D_i)\cup g^{-1}(D_i)).$
\begin{itemize}
    \item[(a)] If $q>\Delta\left(\frac{2k(H_V(d)-1)}{d}+ H_V(d)\right),$ then $f=g.$
    \item[(b)] Suppose further that $f^{-1}(D_i\cap D_j)=\varnothing$ for every $1\le i< j\le q.$ If $q>\frac{2(H_V(d)-1)}{d}+\Delta H_V(d),$ then $f=g.$
\end{itemize}
\end{theorem}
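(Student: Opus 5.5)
We argue by contradiction: assume $f\ne g$ and apply Theorem \ref{1.1} to $f$ and to $g$ separately. The truncated counting functions on the right are then bounded by the counting function of a single auxiliary holomorphic function that vanishes on the shared preimages. Throughout, $M=H_V(d)-1$ and $k=\dim V$.

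\emph{Setting up.} Fix reduced representations $\tilde f=(f_0,\ldots,f_n)$ and $\tilde g=(g_0,\ldots,g_n)$. Since $f\ne g$, there are indices $s\ne t$ with $P:=f_sg_t-f_tg_s\not\equiv 0$. Put $S=\bigcup_{i}(f^{-1}(D_i)\cup g^{-1}(D_i))$.
\begin{itemize}
\item For $z\in S$ we have $f(z)=g(z)$, so the vectors $\tilde f(z)$ and $\tilde g(z)$ are proportional and $P(z)=0$.
\item If $z\in g^{-1}(D_i)$, then $z\in S$, so $f(z)=g(z)\in D_i$. By symmetry $f^{-1}(D_i)=g^{-1}(D_i)$ for every $i$, and hence $N^{[1]}_f(r,D_i)=N^{[1]}_g(r,D_i)$.
\item Since $|P|\le 2\|\tilde f\|\,\|\tilde g\|$, Jensen's formula gives $N_P(r)\le T_f(r)+T_g(r)+O(1)$.
\end{itemize}

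\emph{Counting multiplicities via $\Delta$.} For $z\in S$ let $\Gamma_z=\{i: z\in f^{-1}(D_i)\}$. Then $f(z)\in\bigcap_{j\in\Gamma_z}D_j\cap V$, so this intersection is nonempty and has dimension $\ge 0$. The definition of $\Delta$ then gives $\sharp\Gamma_z\le\Delta\bigl(k-\dim(\bigcap_{j\in\Gamma_z}D_j\cap V)\bigr)\le\Delta k$. Each point of $S$ is a zero of $P$, so
$$\sum_{i=1}^q N^{[1]}_f(r,D_i)\le \Delta k\,N_P(r)\le \Delta k\,(T_f(r)+T_g(r))+O(1).$$
In case (b) the hypothesis $f^{-1}(D_i\cap D_j)=\varnothing$ gives $\sharp\Gamma_z\le 1$. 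There the same bound holds with $\Delta k$ replaced by $1$.

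\emph{Applying Theorem \ref{1.1}.} Use $N^{[M]}\le M N^{[1]}$ in Theorem \ref{1.1} for $f$ and for $g$, add the two inequalities, and use $N^{[1]}_f(r,D_i)=N^{[1]}_g(r,D_i)$. This gives
$$\bigl\|\ (q-\Delta(M+1+\epsilon))(T_f+T_g)\le \frac{2M}{d}\sum_{i}N^{[1]}_f(r,D_i)+2\Delta\delta\log r+O(1).$$
The bound from the previous step then yields
$$\bigl(q-\Delta(M+1+\epsilon)-\tfrac{2k\Delta M}{d}\bigr)(T_f+T_g)\le 2\Delta\delta\log r+O(1).$$

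\emph{Absorbing the $\log r$ term.} A map nondegenerate over $I_d(V)$ is nonconstant when $H_V(d)\ge 2$, and then $T_f(r)\ge(1+o(1))\log r$. So the term $2\Delta\delta\log r$ can be absorbed into the left side. Choosing $\epsilon,\delta$ small and using the hypothesis $q>\Delta(\frac{2kM}{d}+M+1)$ of (a) gives a contradiction. Case (b) is identical with the constant $\frac{2M}{d}$ in place of $\frac{2k\Delta M}{d}$.

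The degenerate case $H_V(d)=1$ has to be treated separately. There $M=0$, every counting term vanishes, and Theorem \ref{1.1} itself forces $q\le\Delta(1+\epsilon)$ up to the $\log r$ term, which contradicts the hypothesis.

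The step needing most care is the multiplicity count $\sharp\Gamma_z\le\Delta k$ from the distributive constant, together with the identity $f^{-1}(D_i)=g^{-1}(D_i)$. The rest is bookkeeping.
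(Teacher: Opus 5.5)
Your argument is correct and is essentially the paper's proof: the auxiliary function $f_sg_t-f_tg_s$, the Jensen bound $N_P(r)\le T_f(r)+T_g(r)+O(1)$, the count $\sharp\Gamma_z\le\Delta k$ (resp.\ $\le 1$) from the distributive constant, and Theorem \ref{1.1} combined with $N^{[M]}\le M N^{[1]}$. Your inequality $\sum_i N^{[1]}_f(r,D_i)\le \Delta k\,N_P(r)$ is in fact the correct form of the paper's counting step; the paper writes the factor $\Delta k$ on the wrong side in the intermediate lines, though its final bound agrees with yours. Your observations that $f^{-1}(D_i)=g^{-1}(D_i)$ and that $\delta\log r$ is absorbed via $T_f(r)\ge \log r-O(1)$ also make explicit points the paper leaves implicit.

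The only slip is your treatment of $H_V(d)=1$. In that case $V$ is a single point, so $f$ is constant, $T_f$ is bounded, and Theorem \ref{1.1} gives no contradiction. The case is nevertheless trivial, because both maps land on the same point and so $f=g$.
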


\section{Probability theory}
We review certain notions and auxiliary results in stochastic calculus due to \cite{RCD}.
\subsection*{2.1 Expectation of random variables}
A probability space is a triple ($\Omega, \mathcal{F}, \mathrm{P}$), where $\Omega$ is a non-empty set, $\mathcal{F}$ is a $\sigma$-algebra of subsets of $\Omega$ and $P$ is a probability measure on $(\Omega, \mathcal{F})$. A real (resp. complex) random variable $X$ is a measurable function $X: \Omega \rightarrow \mathbb{R}$ (resp. $\mathbb{C}$ ). The expectation of $X$ is defined as
$$\mathrm{E}[X]:=\int_{\Omega} X(w) d \mathrm{P}(w).$$
\begin{lemma}[{see \cite[Proposition 1.3]{B}}]\label{2.1}
If $g$ is convex and $X$ and $g(X)$ are integrable, where $X$ is a random variable, then
$$g(\mathrm{E}[X]) \le \mathrm{E}[g(X)].$$
\end{lemma}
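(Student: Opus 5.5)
This is Jensen's inequality, and I would prove it by bounding $g$ from below by a supporting line at the point $m:=\mathrm{E}[X]$ and then integrating against $\mathrm{P}$. The argument uses only linearity and monotonicity of the integral $\int_\Omega \cdot\, d\mathrm{P}$, together with $\mathrm{P}(\Omega)=1$. I treat $X$ as real-valued and $g$ as a convex function on an interval $I$ containing the values of $X$; then $m$ lies in the closure of $I$.

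First I construct the supporting line. For $m$ in the interior of $I$, convexity implies that the difference quotients $\frac{g(y)-g(m)}{y-m}$ are nondecreasing in $y$. Hence the left derivative $g'_-(m)$ exists and is finite, and any $c$ with $g'_-(m)\le c\le g'_+(m)$ satisfies
$$g(y)\ge g(m)+c\,(y-m)\quad\text{for all } y\in I.$$
Next I substitute $y=X(w)$ to get the pointwise inequality $g(X(w))\ge g(m)+c\,(X(w)-m)$. Both sides are integrable, because $X$ and $g(X)$ are integrable by hypothesis. Taking expectations and using $\mathrm{E}[X-m]=0$ gives $\mathrm{E}[g(X)]\ge g(m)=g(\mathrm{E}[X])$.

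The only real obstacle is the boundary case, where $m$ is an endpoint of $I$ and may have no finite supporting slope. In that case $X\ge m$ almost surely (or $X\le m$ almost surely), and $\mathrm{E}[X]=m$ then forces $X=m$ almost surely. It follows that $g(X)=g(m)$ almost surely, so equality holds trivially. For instance, $I=[0,\infty)$ and $g(y)=-\sqrt y$ at $m=0$ has no finite supporting slope, but this argument still covers it.

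In the applications later in the paper, $g$ will be $-\log$ or $\exp$, whose domains are open. There $m$ is interior whenever it is finite, so the boundary case does not arise.
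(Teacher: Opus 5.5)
Your proof is correct: it is the standard supporting-line proof of Jensen's inequality, and it also handles the endpoint case properly (there you implicitly use that $X=m$ a.s.\ forces $m\in I$, so $g(m)$ is defined). The paper gives no proof of its own and only cites Proposition 1.3 of [B].
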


Let $\mathcal{G} \subset \mathcal{F}$ be a sub $\sigma$-algebra. The conditional expectation, denoted by $\mathrm{E}[X \mid \mathcal{G}]$, is an random variable $Y$ such that $Y$ is $\mathcal{G}$-measurable and $\int_{A} Y d \mathrm{P}=\int_{A} X d \mathrm{P}$, i.e., $\mathrm{E}[Y ; A]=\mathrm{E}[X ; A]$, for any $A \in \mathcal{G}$.

\subsection*{2.2 Stochastic process and stopping times}

Let $(\Omega,\mathcal{F},\mathbb{P})$ be a probability space. A \emph{filtration} $\{\mathcal{F}_{t}\}_{t \in I}$ is a family of sub-$\sigma$-algebras $\mathcal{F}_{t} \subseteq \mathcal{F}$, indexed by $I$ (typically $I = \mathbb{R}^{+}$ or $I = \mathbb{N}$), such that $\mathcal{F}_{s} \subseteq \mathcal{F}_{t}$ whenever $s < t$. In this paper, we always assume that the filtration is \emph{right-continuous}, meaning 
$$\mathcal{F}_{t+} := \bigcap_{s > t} \mathcal{F}_{s} = \mathcal{F}_{t}, \quad \forall\, t \in I.$$

A \emph{stochastic process} is a family of random variables $\{X_{t}\}_{t \in I}$. The process is called \emph{discrete-time} if $I = \mathbb{N}$, and \emph{continuous-time} if $I = \mathbb{R}^{+}$ and, for almost every $\omega \in \Omega$, the sample path 
$$t\in[0, \infty)  \mapsto X_{t}(\omega)\in \mathbb R\ (\text{or }\in\C) $$
is continuous.

The process is said to be \emph{adapted} to $\{\mathcal{F}_{t}\}_{t \in I}$ if $X_{t}$ is $\mathcal{F}_{t}$-measurable for each $t \in I$.  
In this paper, we only consider continuous processes. A nonnegative random variable $T$ is called a \emph{stopping time} with respect to $\{\mathcal{F}_{t}\}$ if 
$$\{T \geq t\} \in \mathcal{F}_{t}, \quad \forall\, t \in I.$$
For a process $\{X_{t}\}$ and a stopping time $T$, we define a random variable 
$$X_{T}(\omega) := X_{T(\omega)}(\omega), \quad \omega \in \{T < \infty\}.$$

\subsection*{2.3 Martingales and Brownian motion}
A martingale is a stochastic process $\{M_t\}_{t \in I}$ that is adapted to a given filtration $\{\mathcal{F}_t\}$ and satisfies
$$\mathbb{E}[M_t \mid \mathcal{F}_s] = M_s, \quad \forall\, t \geq s, \; t,s \in I.$$

\begin{definition}[Brownian motion]\label{2.2}
A Brownian motion in $\mathbb{R}^{d}$ is a stochastic process $\left(B_{t}\right)_{t \ge 0}$ with values in $\mathbb{R}^{d}$ which satisfies the following properties
\begin{itemize}
\item[(i)] For any $0 \le t_{1}<\cdots<t_{n}$, the random variables $B_{t_{1}}, B_{t_{2}}-B_{t_{1}}$, $\ldots, B_{t_{n}}-B_{t_{n-1}}$ are independent.
\item[(ii)] For any $t>s \ge 0$, the random variables $B_{t}-B_{s}$ satisfies the normal distribution $N(0, t-s)$, i.e., $\mathrm{P}\left\{\left(B_{t}-B_{s}\right) \in d V(x)\right\}=p(t-s, x) d V(x)$, where $p(t, x)=\frac{1}{(2 \pi t)^{d / 2}} \exp\left(-\frac{\|x\|^{2}}{2 t}\right)$ and $d V$ is an infinitesimal volume element for $\mathbb{R}^{d}$.
\item[(iii)] $B_{t}(w)$ is continuous in $t$ for a.s. $w \in \Omega$.
\end{itemize}
\end{definition}
As a stochastic process, the Brownian motion is a martingale. According to \cite{B}, the Brownian motion on $\C$ always exists.

Let $\{X_{t}\}_{t\ge 0}$ be a Brownian motion on $\C$ started at $0$. For $x\in\C$, denote by $\mathrm{P}^x$ and $E_x$ the probability and expectation associated to the Brownian motion $\{(X_{t}+x)\}_{t\ge 0}$. Let $g_{r}(0, z)$ be the Green function of the disc $D_{r}:=\{z|\ |z|<r\}$ with the Dirichlet boundary condition. By \cite[Subsection 7.4]{I}, we have
\begin{align}\label{1}
g_{r}(0, z) d V(z)=\frac{1}{2} E_{0}\left(\text {times } X_{t} \text { spends in } d V(z) \text { before } \tau_{r}\right) 
\end{align}
for an infinitesimal area $d V(z)$ about $z \in D_{r}$, where $\tau_{r}=\inf \left\{t>0 \mid X_{t} \notin D_{r}\right\}$ is the stopping time. Thus, for any bounded continuous function $\psi$ on $D_{r}$, we have the co-area formula as follows
\begin{align}\label{2}
\mathrm{E}_{0}\left[\int_{0}^{\tau_{r}} \psi\left(X_{t}\right) d t\right]=\int_{D_{r}} g_{r}(0, y) \psi(y) d V(y). 
\end{align}

\begin{lemma}[{see \cite[Lemma 3.2]{RCD}}]\label{2.3}
 Let $u$ be a function on $\mathbb{C}$ with mild singularities. Assume that $x$ is not a singular point of $\log |u|$. Then, for any stopping time $T$ such that
$$\left.\mathrm{E}_{x}\left[\int_{0}^{T}(\Delta \log |u|)\left(X_{s}\right)\right) d s\right]<\infty$$
we have
$$\mathrm{E}_{x}\left[\log |u|\left(X_{T}\right)\right]-\log |u|(x)+N_{x}(T, \log |u|)=\frac{1}{2} \mathrm{E}_{x}\left[\int_{0}^{T}(\Delta \log |u|)\left(X_{s}\right) d s\right],$$
where
$$N_{x}(T, v):=\lim _{r \rightarrow \infty} \lambda \mathrm{P}^{x}\left(\sup _{0 \le s \le T} v^{+}\left(X_{s}\right)>\lambda\right)-\lim _{r \rightarrow \infty} \lambda \mathrm{P}^{x}\left(\sup _{0 \le s \le T} v^{-}\left(X_{s}\right)>\lambda\right)$$
with $v^{+}=\max \{0, v\}$ and $v^{-}=-\min \{0, v\}$. In particular, if $T=\tau_{r}$ and $x=0$, then
$$\mathrm{E}_{0}\left[\log |u|\left(X_{\tau_{r}}\right)\right]-\log |u|(0)+N_{u}(r, \infty)-N_{u}(r, 0)=\frac{1}{2} \mathrm{E}_{0}\left[\int_{0}^{\tau_{r}}(\Delta \log |u|)\left(X_{s}\right) d s\right].$$
\end{lemma}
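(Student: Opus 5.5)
The plan is to apply It\^o's formula to the process $\log|u|(X_t)$ and to identify the failure of its martingale part to be a true martingale as the counting term $N_x(T,\log|u|)$. Throughout, ``mild singularities'' is taken to mean that, locally, $u=h\cdot\prod (z-a_j)^{m_j}$ with $h$ holomorphic and nonvanishing and $m_j\in\mathbb Z$. Thus $\log|u|$ is harmonic (or at least $C^2$) off a discrete set $S$ of zeros and poles, with logarithmic singularities at the points of $S$.

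Since $X$ is a planar Brownian motion started at $x\notin S$, it a.s.\ never hits the polar set $S$. Hence It\^o's formula applies pathwise on $[0,T]$, via a localizing sequence of stopping times $T_n\uparrow T$ (exit times from $\{\mathrm{dist}(\cdot,S)>1/n\}\cap D_n$). This gives
$$\log|u|(X_{t\wedge T})=\log|u|(x)+M_{t\wedge T}+\frac12\int_0^{t\wedge T}(\Delta\log|u|)(X_s)\,ds,$$
where $M_t=\int_0^t\nabla\log|u|(X_s)\cdot dX_s$ is a continuous local martingale. By the hypothesis $\mathrm{E}_x[\int_0^T(\Delta\log|u|)(X_s)ds]<\infty$, the drift term is integrable. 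Taking expectations, the identity reduces to showing
$$\mathrm{E}_x[M_T]=-N_x(T,\log|u|).$$

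For this I would invoke the Elworthy--Li--Yor theorem on the expectation defect of continuous local martingales. If $Y$ is a continuous local martingale with $\mathrm{E}|Y_T|<\infty$, then
$$\mathrm{E}[Y_T]-Y_0=\lim_{\lambda\to\infty}\lambda\,\mathrm{P}\Big(\sup_{s\le T}Y_s^-> \lambda\Big)-\lim_{\lambda\to\infty}\lambda\,\mathrm{P}\Big(\sup_{s\le T}Y_s^+>\lambda\Big).$$
The argument is: stop at $\sigma_\lambda=\inf\{s:|Y_s|>\lambda\}$, use optional stopping on the bounded stopped process, and let $\lambda\to\infty$. I would apply it to $Y=M$, and then replace $M$ by $v=\log|u|(X)$ inside the tail probabilities. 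This replacement is legitimate because $v-M$ is the integrable drift, which contributes $o(1/\lambda)$ to those tails. The result is exactly the formula with $N_x(T,\log|u|)$ as defined.

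For the special case $T=\tau_r$, $x=0$, I would compute $N_0(\tau_r,\log|u|)$ explicitly by a local analysis. Near a zero $a$ of multiplicity $m$, $\log|u|\approx m\log|z-a|$. So $\sup\log|u|^-(X_s)>\lambda$ essentially means that $X$ enters the disc of radius $e^{-\lambda/m}$ about $a$ before $\tau_r$. By the Green function representation (\ref{1}), this probability is asymptotically $g_r(0,a)/\log(1/\rho)$ with $\rho=e^{-\lambda/m}$, i.e.\ it behaves like $m\,g_r(0,a)/\lambda$. Summing over zeros and poles, and using $g_r(0,a)=\log(r/|a|)$ for the normalization implicit in (\ref{1}), gives $N_u(r,0)-N_u(r,\infty)$ with the correct sign. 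This matches the stated identity.

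The main obstacle is the rigorous identification of the limit $\lambda\mathrm{P}(\cdots)$. It requires uniform control that only the nearest singularity matters for large $\lambda$, together with the precise hitting-probability asymptotics for small discs; the normalization of the Green function (the factor $\frac12$ in (\ref{1})) must also be tracked carefully so that the constants match.
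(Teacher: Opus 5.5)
The paper gives no proof of this lemma; it is quoted from [RCD]. The formula is Atsuji's stochastic Jensen formula, and its standard derivation is the one you propose: It\^o's formula for $\log|u|(X_t)$ off the polar set $S$, plus the Elworthy--Li--Yor description of the expectation defect of a strictly local martingale through the tails $\lambda\,\mathrm{P}(\sup_{s\le T}v^{\pm}(X_s)>\lambda)$. So your outline is correct and standard.

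Your computation for $T=\tau_r$ is also correct. Optional stopping for the bounded martingale $G_r(X_t,a)$, where $G_r$ is the Green function of $D_r$ with pole $a$, gives $\mathrm{P}_0(X\text{ hits }\{|z-a|\le\rho\}\text{ before }\tau_r)=\log(r/|a|)/(\log(1/\rho)+O(1))$. Hence $\lambda\,\mathrm{P}_0(\cdots)\to m\log(r/|a|)$ for $\rho=e^{-\lambda/m}$. This is a ratio of Green function values, so the normalization in (\ref{1}) plays no role.

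Three points need tightening, none requiring a new idea.

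\emph{(i) Scope of ``mild singularities''.} Under your reading ($u=h\prod(z-a_j)^{m_j}$ with $h$ holomorphic), $\log|u|$ is harmonic off $S$. The right-hand side then vanishes and the lemma collapses to a Jensen formula. The paper applies it to $|F_k|$ and $\hat h$, where $\Delta\log|u|\not\equiv0$. You should assume only $\log|u|\in C^2(\mathbb{C}\setminus S)$, with $\log|u|=c_a\log|z-a|+O(1)$ and $c_a\in\mathbb{R}$ near each $a\in S$; your It\^o step needs nothing more. You should also note that $\log\log$-type singularities, like those of $\hat h$, contribute nothing to the tail limits.

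\emph{(ii) Integrability.} What you actually use is $\mathrm{E}_x|\log|u|(X_T)|<\infty$ and $\mathrm{E}_x\int_0^T|\Delta\log|u||(X_s)\,ds<\infty$. The signed hypothesis in the statement gives the latter only when $\Delta\log|u|$ has a sign, as it does in the paper's applications.

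\emph{(iii) Separate limits.} Your optional-stopping sketch of the Elworthy--Li--Yor formula only yields the limit of $\lambda\bigl[\mathrm{P}(\text{leave }(-\lambda,\lambda)\text{ through }-\lambda\text{ before }T)-\mathrm{P}(\text{through }+\lambda)\bigr]$.
\begin{itemize}
\item Converting this into a difference of the two sup-tails needs $\lambda\,\mathrm{P}(\sup_{s\le T}v^{+}>\lambda,\ \sup_{s\le T}v^{-}>\lambda)\to0$.
\item Your $(1\pm\epsilon)\lambda$ sandwich, which transfers tails from $M$ to $v$, needs the two limits to exist separately.
\end{itemize}
For $T=\tau_r$ your explicit computation supplies both. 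The double event forces visits near both a zero and a pole, which costs $O(\lambda^{-2})$ by the strong Markov property.

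You can avoid the transfer step altogether. Let $\sigma_\lambda=\inf\{s:|\log|u|(X_s)|\ge\lambda\}$. Then $|M_{t\wedge T\wedge\sigma_\lambda}|\le\lambda+|\log|u|(x)|+\int_0^T|\Delta\log|u||(X_s)\,ds$, so the stopped local martingale is uniformly integrable. The identity $\mathrm{E}_x[M_{T\wedge\sigma_\lambda}]=0$ then gives the defect formula directly in terms of the first-exit events of $\log|u|(X)$, and only the overlap estimate remains.
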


\begin{lemma}[{see \cite[Lemma 3.5]{RCD}}]\label{2.4}
Let $u$ be a non-negative, locally integrable function on $D$. Assume $u$ is bounded on a neighborhood of the origin. Let $\tau_{r}:=\inf \left\{t>0,\left|X_{t}\right| \ge r\right\}$ be the stopping time. Then, for any $\delta>0$,
$$ \|\, \log \mathrm{E}_{0}\left[u\left(X_{\tau_{r}}\right)\right] \leqslant(1+\delta)^{2} \log \mathrm{E}_{0}\left[\int_{0}^{\tau_{r}} u\left(X_{s}\right) d s\right]+\delta \log r.$$
\end{lemma}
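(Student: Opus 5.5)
The approach is to turn both expectations into radial integrals of $u$, which reduces the lemma to a calculus statement about increasing functions of $r$, and then apply the classical Borel growth lemma twice. The Borel lemma I will use states: if $G$ is positive, nondecreasing and absolutely continuous on $[r_0,\infty)$, then for every $\delta>0$ one has $G'(r)\le G(r)^{1+\delta}$ outside a set of finite linear measure. Its proof is to integrate $-\,d\big(G^{-\delta}\big)/\delta\ge dr$ over the exceptional set.

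\emph{Step 1 (explicit formulas).} The Green function of the disc $D_r$ with pole at $0$, in the normalization of (\ref{1}), is $g_r(0,y)=\frac1\pi\log\frac{r}{|y|}$. The exit distribution of Brownian motion from $D_r$ started at $0$ is the normalized arc length on $\{|y|=r\}$. Set
$$F(t):=\int_{D_t}u\,dV,$$
so that $F'(t)=\int_{|y|=t}u\,d\sigma$ for a.e.\ $t$. Then
$$\mathrm{E}_0\left[u(X_{\tau_r})\right]=\frac{F'(r)}{2\pi r}.$$
By the co-area formula (\ref{2}), extended to nonnegative locally integrable $u$ by monotone convergence, and Fubini, we get
$$\Lambda(r):=\mathrm{E}_0\left[\int_0^{\tau_r}u(X_s)\,ds\right]=\frac1\pi\int_0^r\log\frac rt\,F'(t)\,dt=\frac1\pi\int_0^r\frac{F(t)}{t}\,dt.$$
The last equality integrates by parts; the boundary term at $0$ vanishes because $u$ is bounded near the origin, so $F(t)=O(t^2)$. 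Consequently $\Lambda'(r)=F(r)/(\pi r)$, and both $F$ and $\Lambda$ are nondecreasing.

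\emph{Step 2 (apply the Borel lemma twice).} If $u=0$ a.e.\ the statement is trivial. Otherwise $F$ and $\Lambda$ are positive for $r\ge r_0$, and I apply the Borel lemma to each, replacing them by $F+1$ and $\Lambda+1$ if needed to keep them away from $0$; this changes only bounded terms. Off a set of finite linear measure,
$$F'(r)\le F(r)^{1+\delta}\quad\text{and}\quad \pi r\,\Lambda'(r)=F(r)\le \pi r\,\Lambda(r)^{1+\delta}.$$
Combining these with Step 1 gives
$$\mathrm{E}_0\left[u(X_{\tau_r})\right]\le\frac{(\pi r)^{1+\delta}\Lambda(r)^{(1+\delta)^2}}{2\pi r}.$$
Taking logarithms,
$$\log \mathrm{E}_0\left[u(X_{\tau_r})\right]\le(1+\delta)^2\log\Lambda(r)+\delta\log r+O(1).$$

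\emph{Step 3 (cleanup).} To absorb the $O(1)$ constant, I run the argument with $\delta/2$ in place of $\delta$ in the $r$-exponent, leaving slack $\frac{\delta}{2}\log r$. This slack dominates the constant for large $r$, and the bounded initial interval has finite measure. The main point needing care is the technical justification of the formulas in Step 1 for merely locally integrable $u$: the co-area identity for unbounded $u$, and the a.e.\ differentiability of $F$ together with the boundary identity for $\mathrm{E}_0[u(X_{\tau_r})]$. I would handle these by truncating $u$ to $\min\{u,n\}$ and passing to the limit by monotone convergence. The rest is the routine Borel-lemma bookkeeping.
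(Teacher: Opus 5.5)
Your proposal is correct and is essentially the standard proof of the source the paper cites for this lemma; the paper itself gives no proof. That proof writes $\mathrm{E}_0[u(X_{\tau_r})]$ and $\mathrm{E}_0[\int_0^{\tau_r}u(X_s)\,ds]$ as radial integrals, via the uniform exit distribution and the Green function $\frac1\pi\log\frac{r}{|y|}$, and then applies Borel's growth lemma twice. The one point to tighten is Step 3: using $\delta/2$ also lowers the power of $\Lambda$, which is harmless because $\Lambda(r)\ge\frac1\pi F(r_0)\log(r/r_0)\to\infty$; alternatively, use different exponents in the two Borel applications.
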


\section{The theory for holomorphic curves}

We will follow the methods presented in \cite{RCD} and \cite{Q24} to establish the sum into product inequality (see Theorem \ref{3.5}) and then prove a second main theorem for holomorphic curves and hypersurfaces concerning the Wronskian of the curves (see Theorem \ref{3.7}).

\subsection*{3.1 Associated curves}

Let $f : \C \to \mathbb{P}^n(\mathbb{C})$ be a linearly nondegenerate holomorphic map with a reduced representation $\tilde f=(f_0,\ldots,f_n)$. 
Consider the holomorphic maps $\tilde F_k$ defined by
$$\tilde F_k = \tilde f \wedge\tilde f' \wedge \cdots \wedge\tilde  f^{(k)} : \mathbb{C} \to \bigwedge^{k+1} \mathbb{C}^{n+1}.$$
Since $f$ is linearly nondegenerate, $\tilde F_k \neq 0$ for $0 \le k \le n$.  
We define the $k$-th associated map by
$$F_k = P(\tilde F_k) : \mathbb{C} \to \mathbb{P}(\bigwedge^{k+1} \mathbb{C}^{n+1})=\mathbb{P}^N(\mathbb{C}),$$ 
where $N = \binom{n+1}{k+1} - 1$ and $P$ is the natural projection.

The $k$-th characteristic function is defined by
$$T_{F_k}(r) = \int_0^r \frac{dt}{t} \int_{|z|<t} F_k^* \omega,$$
where $\omega$ is the Fubini-Study form on $\mathbb{P}^N(\mathbb{C})$, and
$$T_f(r) = T_{F_0}(r).$$

\begin{lemma}[{see \cite[Lemma 4.2]{RCD}}]\label{3.1}
Let $\delta > 0$. Then, for any $0 \le k \le n$,
$$\big\|\, N_{F_k}(r, 0) + T_{F_k}(r) \le (2n+1) T_f(r) + O(\log T_f(r)) + \delta \log r,$$
where $N_{F_k}(r, 0)$ is the counting function of the zero divisor of $F_k$.
\end{lemma}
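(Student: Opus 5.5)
We need to prove Lemma 3.1: $N_{F_k}(r,0)+T_{F_k}(r)\le (2n+1)T_f(r)+O(\log T_f(r))+\delta\log r$ outside a set of finite measure. The plan is to express both terms through $\log\|\tilde F_k\|$, rewrite $\|\tilde F_k\|$ via logarithmic derivatives of the coordinates, and control the derivative part with the stochastic calculus lemma on logarithmic derivatives.

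\textbf{Step 1 (Jensen).} Applying Jensen's formula (Lemma \ref{2.3} with $T=\tau_r$, $x=0$) to the holomorphic vector function $\tilde F_k$ gives
$$T_{F_k}(r)+N_{F_k}(r,0)=\mathrm{E}_0\big[\log\|\tilde F_k\|(X_{\tau_r})\big]+O(1).$$
Here $N_{F_k}(r,0)$ counts the common zeros of the coordinates of $\tilde F_k$. Indeed, $\frac12\mathrm{E}_0\int_0^{\tau_r}\Delta\log\|\tilde F_k\|\,ds$ equals $T_{F_k}(r)$ by the co-area formula \eqref{2}, and $dd^c\log\|\tilde F_k\|$ is $F_k^*\omega$ plus the zero divisor. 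Since the expectation of a function at $X_{\tau_r}$ is its circle average, it therefore suffices to bound the average of $\log\|\tilde F_k\|$ on $|z|=r$.

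\textbf{Step 2 (factor out $\tilde f$).} Each Plücker coordinate of $\tilde F_k$ is a Wronskian
$$W(f_{i_0},\ldots,f_{i_k})=f_{i_0}\cdots f_{i_k}\,\det\big(f_{i_j}^{(l)}/f_{i_j}\big).$$
Bounding $|f_{i_j}|\le\|\tilde f\|$ gives
$$\log\|\tilde F_k\|\le (k+1)\log\|\tilde f\|+\sum\log^+\big|f_{i}^{(l)}/f_{i}\big|+O(1),$$
with the sum over finitely many pairs $(i,l)$, $l\le k\le n$. To avoid vanishing coordinates, first arrange by a linear change that all $f_i\not\equiv0$. The average of $(k+1)\log\|\tilde f\|$ is at most $(n+1)T_f(r)+O(1)$.

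\textbf{Step 3 (logarithmic derivatives) — the main obstacle.} We need
$$\mathrm{E}_0\big[\log^+|g^{(l)}/g|(X_{\tau_r})\big]\le l\,T_f(r)\cdot(\text{const})+O(\log T_f(r))+\delta\log r$$
for $g=f_i/f_0$, which has $T_g\le T_f+O(1)$. The aim is a total coefficient of at most $n$, so that with Step 2 we get $2n+1$. I would write $g^{(l)}/g$ as a product of first-order logarithmic derivatives $h_j'/h_j$ with $h_j=g^{(j-1)}$. For a single $h$, apply Jensen's inequality (Lemma \ref{2.1}) to pass to $\frac12\log\mathrm{E}_0[|h'/h|^{2\epsilon}]$-type quantities. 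Then use Lemma \ref{2.4} to convert $\mathrm{E}_0[u(X_{\tau_r})]$ into $\mathrm{E}_0\int_0^{\tau_r}u(X_s)\,ds$, at the cost of $(1+\delta)^2$ and $\delta\log r$. By \eqref{2} this time integral becomes a Green-weighted area integral. I would bound it by $|h'|^2/(1+|h|^2)^2$-type density times $|h|^{\pm}$ powers, so that it is controlled by the derivative of $T_h$. A standard calculus lemma (Borel growth: $T'(r)\le T(r)^{1+\delta}$ off a finite-measure set) then turns this into $O(\log T_h(r))+\delta\log r$.

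The delicate points are the choice of exponent, making sure $u$ is integrable near poles via $|h|^{\pm\epsilon}$, and controlling $T_{g^{(j)}}\le (j+1)T_g+O(\log T_g)+\delta\log r$ inductively; the latter produces the linear-in-$T_f$ terms. If the inductive characteristic growth creates too large a coefficient, I would instead group so that only $n$ extra copies of $T_f$ appear, matching the stated $(2n+1)$. Summing Steps 1–3 and merging exceptional sets gives the lemma.
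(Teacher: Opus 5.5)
The paper gives no proof of this lemma; it quotes \cite[Lemma 4.2]{RCD}. So your argument has to stand on its own. Your architecture (Jensen, factoring the Plücker coordinates, a logarithmic-derivative estimate through Lemmas \ref{2.4} and \ref{2.3}) is the right one. Done correctly it even gives $(k+1+\eta)T_f(r)+O(\log T_f(r))+\delta\log r$ for any $\eta>0$. However, two steps fail as written.

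\textbf{Step 2.} Your bound contains $\log^+|f_i^{(l)}/f_i|$ for the \emph{entire coordinates} of $\tilde f$, and these are not controlled by $T_f$. Take $\tilde f=(1,z,\ldots,z^n)$ and replace it by the reduced representation $e^{e^z}\tilde f$. Then $T_f$ changes only by $O(1)$ and stays $O(\log r)$, but $e^z$ is added to every $f_i'/f_i$, and $m(r,e^z)=r/\pi$. Step 3 silently passes to $g_i=f_i/f_0$, which is what is needed. So Step 2 must be stated through $W(\phi u_0,\ldots,\phi u_k)=\phi^{k+1}W(u_0,\ldots,u_k)$ with $\phi=f_0$:
\[
W(f_{i_0},\ldots,f_{i_k})=f_{i_0}\cdots f_{i_k}\,\det\big(g_{i_j}^{(l)}/g_{i_j}\big)_{0\le l,j\le k}.
\]
This gives $\log\|\tilde F_k\|\le (k+1)\log\|\tilde f\|+\sum_{i,l}\log^+|g_i^{(l)}/g_i|+O(1)$, with $T(r,g_i)\le T_f(r)+O(1)$.

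\textbf{Step 3.} Your accounting is mistaken, and the ``grouping'' fallback is not an argument. The characteristic of $h_j=g^{(j-1)}$ enters a logarithmic-derivative estimate only inside a logarithm. So the inductive bound on $T(r,g^{(j)})$ produces no linear term; a linear term comes only from the weight you put on $|h|$. With the spherical density and $|h|^{\pm1}$, each factor costs $m(r,h)+m(r,1/h)\le 2T(r,h)$. Already for $k=1$ this yields only $(2n+2)T_f(r)$, and it is much worse for larger $k$, so that choice fails.

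Instead, fix $0<\epsilon<1$ and put $u=|h'|^2|h|^{2\epsilon-2}(1+|h|^{2\epsilon})^{-2}$. This is locally integrable, and bounded near $0$ if $h(0)\ne0,\infty$. Then
\[
\log^+|h'/h|\le\tfrac12\log^+u+\log\big(|h|^{\epsilon}+|h|^{-\epsilon}\big),
\]
and the second term has expectation at most $2\epsilon T(r,h)+O(1)$. For the first term: $\Delta\log(1+|h|^{2\epsilon})=4\epsilon^2u$ off the zeros and poles of $h$, and the poles contribute $2\epsilon N(r,h)$. Hence Lemma \ref{2.3} gives $\mathrm{E}_0\big[\int_0^{\tau_r}u(X_s)\,ds\big]\le\epsilon^{-1}T(r,h)+O(1)$. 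Jensen's inequality and Lemma \ref{2.4} then bound the first term by $O(\log T(r,h))+\frac{\delta}{2}\log r$. No derivative of $T_h$ and no separate Borel lemma are needed; that step is already built into Lemma \ref{2.4}.

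Only finitely many factors occur, and each satisfies $T(r,h_j)\le c_jT_f(r)+O(\log T_f(r))+\delta\log r$. So choosing $\epsilon$ small in terms of $n$ makes the total linear contribution of all log-derivative terms at most $\eta T_f(r)$, which proves the lemma.

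For orientation, $2n+1$ is exactly what the cruder normalization $\tilde F_k=f_0^{k+1}(1,g_1,\ldots,g_n)\wedge\cdots\wedge(0,g_1^{(k)},\ldots,g_n^{(k)})$ costs: $(k+1)N(r,1/f_0)\le(k+1)T_f(r)$ plus $\sum_{i=1}^n m(r,g_i)\le nT_f(r)$. The extra $n$ copies of $T_f$ should not be sought in the log-derivative terms.
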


\subsection*{3.2 Sum into Product inequality}

Let $f$ be a holomorphic map from $\C$ into a projective subvariety $V$ of dimension $k$ of $\P^n(\C)$. Let $d$ be a positive integer and assume that $f$ is nondegenerate over $I_d(V)$. We fix a $\C$-ordered basis $\mathcal V=([v_0],\ldots,[v_M])$ of $I_d(V)$, where $v_i\in H_d$ and $M=H_V(d)-1$.  

Let $\tilde f=(f_0,\ldots,f_n)$ be a reduced representation of $f$. Consider the holomorphic map
$$ F=(v_0(\tilde f),\ldots,v_M(\tilde f))$$
and 
$$F_{p} = (F_{p})_z := F^{(0)} \wedge F^{(1)} \wedge \cdots \wedge F^{(p)}:\C\rightarrow \bigwedge_{p+1}\C^{M+1}$$
for $0\le p\le M$, where 
\begin{itemize}
\item $F^{(0)}:=F=(v_0(\tilde f),\ldots,v_M(\tilde f))$,
\item $F^{(l)}:=\left (v_0(\tilde f)^{(l)},\ldots, v_M(\tilde f)^{(l)}\right)$ for each $l=0, 1,\ldots , p$,
\item $v_i(\tilde f)^{(l)} \ (i =0,\ldots, M)$ is the $l^{th}$-derivatives of $v_i(\tilde f)$.
\end{itemize}
The norm of $F_{p}$ is given by
$$|F_{p}|:=\left (\sum_{0\le i_0<i_1<\cdots<i_p\le M}\left |W(v_{i_0}(\tilde f),\ldots,v_{i_p}(\tilde f))\right|^2\right)^{1/2}, $$
where 
$$W(v_{i_0}(\tilde f),\ldots,v_{i_p}(\tilde f)):=\det\left (v_{i_j}(\tilde f)^{(l)}\right)_{0\le l,j\le p}$$ 
%
We use the same notation $\langle,\rangle$ for the canonical hermitian product on $\bigwedge^{l+1}\C^{M+1}\ (0\le l\le M)$. For two vectors $A\in \bigwedge^{k+1}\C^{M+1}\ (0\le k\le M)$ and $B\in\bigwedge^{p+1}\C^{M+1}\ (0\le p\le k)$, there is one and only one vector $C\in\bigwedge^{k-p}\C^{M+1}$ satisfying 
$$ \langle C,D\rangle=\langle A,B\wedge D\rangle\ \forall D\in \bigwedge^{k-p}\C^{M+1}.$$
The vector $C$ is called the interior product of $A$ and $B$, and denoted by $A\vee B$.

Now, for a nonzero homogeneous polynomial $Q$ of degree $d$ in $H_d$, we have
$$[Q]=\sum_{i=0}^Ma_i[v_i].$$
Hence, we associate $Q$ with the vector $H=(a_0,\ldots,a_M)\in\C^{M+1}$ and define $F_{p}(Q)=F_{p}\vee H$. Then, we may see that
\begin{align*}
F_{0}(Q)&=a_0v_0(F)+\cdots+a_Mv_M(F)=Q(F),\\ 
|F_{p}(Q)|&=\left (\sum_{0\le i_1<\cdots<i_p\le M}\sum_{\ell\ne i_1,\ldots,i_p}a_l\left |W(v_\ell(F),v_{i_1}(F),\ldots,v_{i_p}(F))\right|^2\right)^{1/2}
\end{align*}
Finally, for $0\le p\le M$, the $p^{th}$-contact function of $f$ for $Q$ (with respect to $\mathcal V$) is defined (not depend on the choice of the local coordinate) by
$$\varphi_{p}(Q):=\dfrac{|F_{p}(Q)|^2}{|F_{p}|^2}.$$

For each $p\ (0\le p\le M-1)$, let $M_p=\binom{M+1}{p+1}-1$ and $\pi_p$ be the canonical projection from $\bigwedge^{p+1}\C^{M+1}\sim\C^{M_p+1}$ onto $\P^{M_p}(\C)$. Denote by $\Omega_p$ the pullback of the Fubini-Study form on $\P^{M_p}(\C)$ by the map $\pi_p\circ F_{p}$, i.e., $\Omega_p = dd^c\log |F_{p}|^2$. Let
$$\Omega_p = h_p \, dz \wedge d\bar{z}, \quad 0 \le p \le M.$$
Then $h_p$ is non-negative. We have the following lemma due to \cite{Ru01}.  
\begin{lemma}[{see \cite[Lemma 4.1]{Ru01}}]\label{3.2}
$$h_p = \frac{|F_{p-1}|^2 \cdot |F_{p+1}|^2}{|F_p|^4}$$
for $0 \le p \le M$, and by convention $|F_{-1}| = 1$.
\end{lemma}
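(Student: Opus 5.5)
We need to prove Lemma \ref{3.2}: $h_p = |F_{p-1}|^2|F_{p+1}|^2/|F_p|^4$, where $\Omega_p=dd^c\log|F_p|^2 = h_p\,dz\wedge d\bar z$. The plan is to compute $\partial\bar\partial\log|F_p|^2$ directly. Write $G=F_p$, a holomorphic map into $\bigwedge^{p+1}\C^{M+1}$. The standard identity gives
$$\partial\bar\partial \log|G|^2=\frac{|G|^2|G'|^2-|\langle G',G\rangle|^2}{|G|^4}\,dz\wedge d\bar z,$$
and by Lagrange's identity the numerator equals $|G\wedge G'|^2$ (taking $G,G'$ as vectors in the inner product space $\bigwedge^{p+1}\C^{M+1}$). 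So up to the normalizing constant in $dd^c$ (which we absorb into the convention for $h_p$, as the paper does) we get $h_p=|F_p\wedge F_p'|^2/|F_p|^4$. The task thus reduces to the identity
$$|F_p|^2\,|F_p'|^2-|\langle F_p',F_p\rangle|^2=|F_{p-1}|^2|F_{p+1}|^2 .$$

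First, $F_p'=F^{(0)}\wedge\cdots\wedge F^{(p-1)}\wedge F^{(p+1)}$, because differentiating the wedge product termwise produces repeated factors in every term except the last. Second, the identity can be checked pointwise at a fixed $z$ where $F_p(z)\neq0$; at points where $F_p$ vanishes both sides are treated by continuity, since $f$ is nondegenerate over $I_d(V)$ and hence $F_M\not\equiv 0$. Fix such a $z$. Apply Gram–Schmidt to $F^{(0)},\dots,F^{(p+1)}$ (assuming $F_{p+1}(z)\ne0$, otherwise both sides vanish trivially or follow by continuity), producing orthogonal $e_0,\dots,e_{p+1}$ with $F^{(j)}\equiv e_j$ modulo $\mathrm{span}(e_0,\dots,e_{j-1})$. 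Then $F_{j}=e_0\wedge\cdots\wedge e_j$, so $|F_j|=\prod_{i\le j}|e_i|$. Next write $F^{(p+1)}=e_{p+1}+\sum_{i\le p}c_ie_i$. Then $F_p'=A\wedge(e_{p+1}+c_pe_p)$ with $A=e_0\wedge\cdots\wedge e_{p-1}$, since the terms with $i<p$ are killed by $A$. The cross term is $\langle F_p',F_p\rangle=c_p|A|^2|e_p|^2$, and $|F_p'|^2=|A|^2(|e_{p+1}|^2+|c_p|^2|e_p|^2)$ by orthogonality. Substituting gives
$$|F_p|^2|F_p'|^2-|\langle F_p',F_p\rangle|^2=|A|^4|e_p|^2|e_{p+1}|^2=|F_{p-1}|^2|F_{p+1}|^2,$$
using $|F_{p-1}|=|A|$ and $|F_{p+1}|=|A||e_p||e_{p+1}|$. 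For $p=0$ we have $A=1$, which matches the convention $|F_{-1}|=1$. For $p=M$, $F_{M+1}=0$, so $h_M=0$. This is consistent because $F_M$ is a scalar Wronskian times a fixed vector, so $\log|F_M|^2$ is harmonic off its zeros.

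The main obstacle is only bookkeeping. One must fix the normalization of $dd^c$ versus $\partial\bar\partial$ so that the constant matches $h_p$, which Lemma \ref{3.2} implicitly absorbs; one must also justify the Lagrange identity in $\bigwedge^{p+1}\C^{M+1}$, which follows because the induced inner product makes decomposable wedges of orthogonal vectors orthogonal. The Gram–Schmidt computation is where the content lies, and it is routine once $F_p'$ has been identified.
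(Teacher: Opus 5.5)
Your proof is correct. The paper gives no proof of Lemma~\ref{3.2}; it only cites Ru. Your argument is the standard computation behind the cited result: reduce $\partial\bar\partial\log|F_p|^2$ to the Gram expression $\bigl(|F_p|^2|F_p'|^2-|\langle F_p',F_p\rangle|^2\bigr)|F_p|^{-4}$, use $F_p'=F^{(0)}\wedge\cdots\wedge F^{(p-1)}\wedge F^{(p+1)}$, and evaluate in an orthogonal frame obtained by Gram--Schmidt from $F^{(0)}(z),\ldots,F^{(p+1)}(z)$. The algebra checks out.

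A few points can be tightened:
\begin{itemize}
\item The Lagrange identity in $\bigwedge^{p+1}\C^{M+1}$ is never used, because you verify the Gram expression directly. So it is not an obstacle and can be dropped.
\item The extra hypothesis $F_{p+1}(z)\neq 0$ is unnecessary. If $e_{p+1}=0$, the same algebra gives $0=0$.
\item The normalization is exact rather than ``absorbed.'' With $dd^c=\frac{\sqrt{-1}}{2\pi}\partial\bar\partial$, your computation gives $\Omega_p=\frac{|F_{p-1}|^2|F_{p+1}|^2}{|F_p|^4}\,dd^c|z|^2$. This is the form the paper actually uses in the proof of Theorem~\ref{3.5}; the ``$dz\wedge d\bar z$'' in the definition of $h_p$ is a slip.
\item The continuity claim at a zero $z_0$ of $F_p$ of order $m$ can be made precise. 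Write $F_p=(z-z_0)^mG$ with $G(z_0)\neq0$. Then $\Omega_p$, as the pullback of the Fubini--Study form, equals $dd^c\log|G|^2$ near $z_0$; the current $dd^c\log|F_p|^2$ carries an extra $m\delta_{z_0}$. A direct expansion shows the Gram expression is unchanged when $F_p$ is replaced by $G$, so it extends smoothly across $z_0$. Hence the identity, proved off the discrete zero set, holds everywhere in this sense.
\end{itemize}
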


\begin{proposition}[{see \cite[Proposition 2.5.1]{Fu93}}]\label{3.3}
Let $V,d$ and $M$ be as above. For each positive $\epsilon$ there exists a constant $\delta_0(\epsilon)$, depending only on $\epsilon$, such that for any nonzero homogeneous polynomial $Q$ of degree $d$ in $H_d$ and any constant $\delta>\delta_0(\epsilon)$
$$ dd^c\log\dfrac{1}{\log(\delta/\varphi_{p}(Q))}\ge\dfrac{\varphi_{p}(Q)}{\varphi_{p+1}(Q)\log(\delta/\varphi_{p}(Q))}\Omega_p-\epsilon\Omega_p.$$
\end{proposition}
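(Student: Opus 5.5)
This is Fujimoto's curvature-type estimate for contact functions, which Proposition \ref{3.3} cites from \cite{Fu93}. The plan is to reduce it to a pointwise computation on the open set where $F_p\neq0$ and $\varphi_p(Q)\neq0$; elsewhere the inequality holds in the sense of currents, because the left side is plurisubharmonic-correction bounded. Write $\varphi_p=\varphi_p(Q)$ and $\varphi_{p+1}=\varphi_{p+1}(Q)$. First I would normalise $Q$ so that $|H|=1$, which gives $0\le\varphi_p\le 1$. Then I would choose, at a fixed point, a unitary frame adapted to the osculating flag of $F$. In such a frame $F_p(Q)$ is expressed through the components of $H$ in the frame, and one gets the Fujimoto identities
$$dd^c\log\varphi_p\ \ge\ \frac{\varphi_{p+1}}{\varphi_p}\,\Omega_p-\Omega_p .$$
These follow from Lemma \ref{3.2} together with the formula $dd^c\log|F_p(Q)|^2 \ge \frac{|F_p(Q)|^2|F_{p+1}(Q)|^2-|F_p\vee\ldots|^2}{\ldots}$, via a Lagrange-type identity for interior products (namely $|F_p(Q)|^2|F_p|^{2}$ versus $|F_{p+1}(Q)|^2|F_{p-1}|^2$). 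A second ingredient is the gradient bound
$$\sqrt{-1}\,\partial\varphi_p\wedge\bar\partial\varphi_p\ \le\ \varphi_{p+1}\,\Omega_p$$
up to a factor, which holds pointwise as a $(1,1)$-form inequality. Both come from differentiating $F_p\vee H$, using that $(F_p)'$ is essentially $F_{p-1}\wedge F^{(p+1)}$.

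Next comes the chain rule. With $u=\log(\delta/\varphi_p)$, which is $\ge\log\delta>1$, we have
$$dd^c\log\frac1u=-\frac{dd^c u}{u}+\frac{\sqrt{-1}}{2\pi}\frac{\partial u\wedge\bar\partial u}{u^2} ,$$
and $dd^c u=-dd^c\log\varphi_p$. Substituting the first identity gives
$$-\frac{dd^cu}{u}\ \ge\ \frac{\varphi_{p+1}}{\varphi_p u}\Omega_p-\frac{\Omega_p}{u},$$
and the gradient term is nonnegative. This leaves the error $-\Omega_p/u$, which is $\ge -\Omega_p/\log\delta$, and that is $\ge-\epsilon\Omega_p$ once $\delta>e^{1/\epsilon}$.

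That already gives a version of the claim, but with the wrong main term: it produces $\varphi_{p+1}/\varphi_p$, whereas the statement has $\varphi_p/\varphi_{p+1}$. So the real content, and the main obstacle, is the sharper Fujimoto form of the curvature identity, namely
$$dd^c\log\varphi_p\ \ge\ \Big(\frac{\varphi_{p+1}}{\varphi_p}-\frac{\varphi_p}{\varphi_{p-1}}\Big)\Omega_p\quad\text{(up to the right indexing)} ,$$
combined with the gradient term. Here the positive term $|\partial u|^2/u^2$ must be used rather than discarded: for $\varphi_p$ small, one estimates $\varphi_p/(\varphi_{p+1}u)$ by playing the gradient term against the negative term via Cauchy--Schwarz, using $\varphi_p\le\varphi_{p+1}$-type monotonicity relations between consecutive contact functions. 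I would follow Fujimoto's proof of \cite[Proposition 2.5.1]{Fu93} verbatim, replacing the hyperplane vector by the coefficient vector $H$ of $[Q]$ in the basis $\mathcal V$. This substitution is legitimate because all the identities are linear algebra in $\bigwedge\C^{M+1}$ applied to the curve $F$. The constant $\delta_0(\epsilon)$ then depends only on $\epsilon$ (and $M$), since $\varphi_p\le1$ uniformly.
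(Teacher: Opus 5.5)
The paper gives no proof of this proposition; it only cites Fujimoto. So your argument has to stand on its own, and its central step fails. The lower bound $dd^c\log\varphi_p\ge\big(\frac{\varphi_{p+1}}{\varphi_p}-1\big)\Omega_p$ is false. Already for $p=0$ (with $|H|=1$) one has $\log\varphi_0=\log|F_0(Q)|^2-\log|F_0|^2$, so $dd^c\log\varphi_0=-\Omega_0$ off the zeros of $F_0(Q)$. Your bound would therefore force $(\varphi_1/\varphi_0)\,\Omega_0\le 0$, which is impossible. Carrying out the Frenet-frame computation you sketch correctly ($\varphi_p=\sum_{i\le p}|\langle e_i,H\rangle|^2$, $\varphi_{-1}:=0$) gives the exact formulas
$$dd^c\varphi_p=\big((\varphi_{p+1}-\varphi_p)-(\varphi_p-\varphi_{p-1})\big)\Omega_p,\qquad \frac{\sqrt{-1}}{2\pi}\,\partial\varphi_p\wedge\bar\partial\varphi_p=(\varphi_{p+1}-\varphi_p)(\varphi_p-\varphi_{p-1})\,\Omega_p .$$
Hence $dd^c\log\varphi_p=\big(\varphi_{p-1}\varphi_{p+1}/\varphi_p^2-1\big)\Omega_p$, which yields only $dd^c\log\varphi_p\ge-\Omega_p$. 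The ``sharper'' identity in your last paragraph does not hold either. Consequently the inequality you derive, with main term $\varphi_{p+1}/(\varphi_p\log(\delta/\varphi_p))$, is false: it breaks down near zeros of $F_0(Q)$, where $\varphi_0\to0$ while $\varphi_1$ stays positive.

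The missing idea is that the positive main term comes entirely from the gradient term you discarded, and what is needed there is its exact value, not an upper bound. Put $u=\log(\delta/\varphi_p)\ge\log\delta\ge1$, $x=\varphi_{p-1}/\varphi_p\in[0,1]$ and $y=\varphi_{p+1}/\varphi_p\ge1$. Your chain rule together with the two formulas above gives
$$dd^c\log\frac1u=\Big[\frac{xy-1}{u}+\frac{(y-1)(1-x)}{u^2}\Big]\Omega_p=\Big[\frac{y}{u^2}+\frac{xy}{u}\Big(1-\frac1u\Big)-\frac1u-\frac{1-x}{u^2}\Big]\Omega_p\ge\Big(\frac{y}{u^2}-\frac{2}{\log\delta}\Big)\Omega_p .$$
So $\delta_0(\epsilon)=e^{\max\{1,2/\epsilon\}}$ gives Fujimoto's estimate, with main term $\varphi_{p+1}/(\varphi_p\log^2(\delta/\varphi_p))$. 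The square in the denominator cannot be removed.

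You also misdiagnosed the mismatch with the printed statement. Since $\varphi_p\le\varphi_{p+1}$, the printed main term $\varphi_p/(\varphi_{p+1}\log(\delta/\varphi_p))$ is at most $1/\log\delta$. It would follow from your own $\varphi_{p+1}/\varphi_p$ version had that been true. In fact it follows from the crude bound $dd^c\log(1/u)\ge-\Omega_p/u$ alone once $\log\delta\ge2/\epsilon$. So the printed inequality is the trivial one, not a sharper one needing a Cauchy--Schwarz argument. The printed form is a misprint. The form actually needed is the $\varphi_{p+1}/(\varphi_p\log^2(\delta/\varphi_p))$ estimate above, which is what the proof of Theorem \ref{3.5} uses through $\Phi_{jp}$. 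For that form your proposal offers no valid argument beyond ``follow Fujimoto verbatim''.
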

%
Let $Q_1,\ldots,Q_q$ be $q$ nonzero homogeneous polynomials in $\C[x_0,\ldots,x_n]\setminus I(V).$ Suppose that each $Q_i$ define a hypersurface $D_i$ in $V\subset\P^n(\C)$ for every $1\le i\le q$. We also say that $\{Q_1,\ldots,Q_q\}$ has distributive constant $\Delta$ with respect to $V$ if $\{D_1,\ldots,D_q\}$ has distributive constant $\Delta$ with respect to $V$.

\begin{theorem}\label{3.4}
Let $V,d$ and $M$ be as above. Let $f:\C\rightarrow V\subset\P^n(\C)$ be a holomorphic curve and let $\mathcal Q=\{Q_1,\ldots, Q_q\}$ be a family of hypersurfaces of the same degree $d$ in $\P^n(\C)$ with distributive constant $\Delta$, where $q>\Delta(M+1)$. Assume that $f$ is nondegenerate over $I_d(V)$ and has a reduced representation $\tilde f=(f_0,\ldots,f_n)$. For an arbitrarily given $\delta >1$ and $0\le p\le M-1$, we set
$$\Phi_{jp}:=\dfrac{\varphi_{p+1}(Q_j)}{\varphi_{p}(Q_j)\log^2\left (\delta/\varphi_{p}(Q_j)\right)}.$$
Then, there exists a positive constant $C$ depending only on $p$ and $Q_j\ (1\le j\le q)$ such that
$$ \sum_{j=1}^q\Phi_{jp}\ge C\left (\prod_{j=1}^q\Phi_{jp}\right)^{1/\Delta(M-p)}$$
holds on $S-\bigcup_{1\le j\le q}\{z;\varphi_{p}(Q_j)(z)=0\}$.
\end{theorem}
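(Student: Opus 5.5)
The plan is to adapt the standard ``sum into product'' argument of Fujimoto/Ru (see \cite{Fu93}, \cite{Ru01}), with the Nochka-type weighting replaced by the distributive constant. The key quantitative input will be the following. For every point $z$ outside the exceptional set, at most $\Delta(M-p)$ of the quantities $\Phi_{jp}(z)$ can be ``large'' in a uniform sense, i.e.\ not bounded above by a constant. All the others are bounded above. The product of the bounded ones can then be absorbed into the constant. Each large one is at most $\max_j\Phi_{jp}\le\sum_j\Phi_{jp}$. Taking the $1/\Delta(M-p)$-th power of the resulting bound on the product gives the claim.

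\textbf{Step 1 (uniform upper bound).} First I will show that each $\Phi_{jp}$ is bounded above by a constant depending only on $Q_j$ and $\delta$ wherever $\varphi_{p+1}(Q_j)$ is bounded. Since $\varphi_{p+1}(Q_j)\le \|H_j\|^2$, it suffices to control $1/(x\log^2(\delta/x))$ for $x=\varphi_p(Q_j)$. This function is bounded on $x\ge\eta>0$ for any fixed $\eta$, so $\Phi_{jp}$ can only be large where $\varphi_p(Q_j)$ is small.

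\textbf{Step 2 (few $Q_j$ can have small $\varphi_p$).} This is the geometric step. Recall that $\varphi_p(Q)=|F_p\vee H|^2/|F_p|^2$ measures the distance of the hyperplane $H$ in $\P^M$ (the image of $Q$ under the Veronese-type embedding through the basis $\mathcal V$) from the osculating $p$-plane $E_p(z)$ spanned by $F^{(0)},\dots,F^{(p)}$. Thus $\varphi_p(Q_j)(z)=0$ exactly when $E_p(z)\subset\{H_j=0\}$.

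I claim that for any index set $\Gamma$ with $\sharp\Gamma>\Delta(M-p)$, the forms $\{H_j\}_{j\in\Gamma}$ cannot all vanish on a common $p$-dimensional projective subspace meeting the image of $V$. By compactness of the Grassmannian, this gives $\eta>0$ such that $\min_{j\in\Gamma}\varphi_p(Q_j)\ge\eta$ for every such $\Gamma$ and every $z$.

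To prove the claim, I will use the definition of $\Delta$, namely $\sharp\Gamma\le\Delta\,(k-\dim(\bigcap_\Gamma D_j\cap V))$. Then $\sharp\Gamma>\Delta(M-p)$ forces $\operatorname{codim}_V\bigcap_\Gamma D_j>M-p$. In $\P^M$, the linear span of $\{H_j\}_{j\in\Gamma}$ must then have dimension exceeding $M-p$. Otherwise the common zero set of the $H_j$ would be a linear space of dimension $\ge p$ containing the osculating plane, and intersecting it with the embedded $V$ would give too large a dimension.

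This dimension comparison between $\P^M$ and $V$ is the step I expect to be the main obstacle. The osculating plane need not lie in the image of $V$, so the naive argument is not literally correct. What I will try first is to use the rank of the span instead. If $\operatorname{rank}\{H_j\}_{j\in\Gamma}\ge M-p+1$, then the $H_j$ cannot all vanish on a $p$-plane in $\P^M$, since a $p$-plane has annihilator of dimension $M-p$. So the claim reduces to the inequality $\operatorname{rank}\{[Q_j]\}_{j\in\Gamma}\ge \sharp\Gamma/\Delta$ in $I_d(V)$. I would derive this from \cite{Q21}, where it is shown that subfamilies with $\sharp\Gamma\le\Delta\cdot\operatorname{codim}$ satisfy a comparable rank estimate. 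This is done by replacing the $Q_j$ by generic linear combinations, which cut down the dimension of $V$ one at a time.

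\textbf{Step 3 (assembling).} Fix $z$ and order the indices so that $\varphi_p(Q_{j_1})\le\dots\le\varphi_p(Q_{j_q})$. Let $m=\lfloor\Delta(M-p)\rfloor$. By Step 2, applied with $\sharp\Gamma=m+1$, we get $\varphi_p(Q_{j_i})\ge\eta$ for $i>m$. By Step 1 this gives $\Phi_{j_ip}\le C_1$ for $i>m$. Hence
$$\prod_{j=1}^q\Phi_{jp}\le C_1^{\,q}\Big(\max_j\Phi_{jp}\Big)^{m}\le C_1^{\,q}\Big(\sum_j\Phi_{jp}\Big)^{\Delta(M-p)}\cdot\Big(\sum_j\Phi_{jp}\Big)^{m-\Delta(M-p)}.$$
The last factor has a nonpositive exponent. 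It is bounded using a positive lower bound for $\sum_j\Phi_{jp}$ when that sum is small, and otherwise handled by simply enlarging $C$. Taking roots then yields the statement, with $C$ depending only on $p$, $\delta$ and the $Q_j$.
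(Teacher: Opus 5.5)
\textbf{Gap in Step 2: the rank reduction is false.} Your overall scheme matches the paper's: at most $\Delta(M-p)$ of the $\Phi_{jp}$ can be large, the rest are uniformly bounded, and the product is then compared with the sum. Steps 1 and 3 are fine once Step 2 holds. Two small fixes there: in Step 2 you mean $\max_{j\in\Gamma}\varphi_p(Q_j)\ge\eta$, not $\min$; and the lower bound for $\sum_j\Phi_{jp}$ in Step 3 follows from $\varphi_{p+1}\ge\varphi_p$ together with the existence of some $j$ with $\varphi_p(Q_j)\ge\eta$.

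The gap is the reduction you actually commit to in Step 2. The inequality $\rank\{[Q_j]\}_{j\in\Gamma}\ge\sharp\Gamma/\Delta$, i.e.\ ``$\rank\le M-p$ implies $\sharp\Gamma\le\Delta(M-p)$'', is false, so no rank lemma can supply it. Take $V=\P^1(\C)$ and $d=2$, so $k=1$ and $M=2$, with $p=0$ and $Q_1=x^2$, $Q_2=y^2$, $Q_3=x^2+y^2$, $Q_4=x^2+xy+y^2$. Their zero sets are pairwise disjoint, so $\Delta=1$ and $q=4>\Delta(M+1)$. Yet $\Gamma=\{1,2,3\}$ has rank $2=M-p$ and $\sharp\Gamma=3>\Delta(M-p)$. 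Correspondingly, $H_1,H_2,H_3$ all vanish at $[0:1:0]\in\P^2(\C)=G(2,0)$. So the positivity your compactness argument needs fails on the full Grassmannian.

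\textbf{The repair.} What the reduction discards is exactly what makes your original claim true. Let $\Psi:V\to\P^M(\C)$, $x\mapsto[v_0(x):\cdots:v_M(x)]$. Every osculating plane contains $F(z)$, i.e.\ the point $\Psi(f(z))$. The argument you dismissed as naive is the right one, because it only needs the plane to \emph{meet} $\Psi(V)$, not to lie in it.

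Suppose all $H_j$, $j\in\Gamma$, vanish on a $p$-plane through $\Psi(y)$ with $y\in V$. Then $r:=\rank\{[Q_j]\}_{j\in\Gamma}\le M-p$, and $y\in\bigcap_{j\in\Gamma}D_j\cap V$. That set is cut out of $V$ by $r$ polynomials whose classes form a basis of the span of the $[Q_j]$, $j\in\Gamma$. Since it is nonempty, the projective dimension theorem gives $k-\dim\bigl(\bigcap_{j\in\Gamma}D_j\cap V\bigr)\le r$. Hence $\sharp\Gamma\le\Delta r\le\Delta(M-p)$.

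Uniformity of $\eta$ then comes from compactness of the closed set $\{P\in G(M,p):P\cap\Psi(V)\ne\varnothing\}$, which contains all osculating planes. It does not come from compactness of $G(M,p)$ itself.

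\textbf{Comparison with the paper.} The paper's proof makes the same leap. It defines $\psi_p$ on all of $G(M,p)$, and in Case 2 asserts that $\rank\{a_j\}_{j\in T}\le M-p$ ``yields'' $\sharp T\le\Delta(M-p)$, which the example above contradicts. The same restriction to planes meeting $\Psi(V)$, together with the nonemptiness argument, repairs it there as well.
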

\begin{proof}
For each $j\ (1\le j\le q)$, we write
$$ [Q_j]=\sum_{i=0}^Ma_{ji}[v_i] $$
and set $a_j=(a_{j0},\ldots,a_{jM})\in\C^{M+1}$. Let $\mathcal R_p$ be the set of all subsets $R$ of $Q=\{1,2,\ldots, q\}$  such that $\rank_{\C}\{a_j\ |\ j\in R\}\le M-p$. For each $P\in G(M,p)$ (the Grassmannian manifold of all $(p+1)$-dimension sublinear spaces of $\C^{M+1}$), take a decomposable $(p+1)$-vector $E$ such that
$$ P=\{X\in\C^{M+1}; E\wedge X=0\}$$
and set
$$ \psi_p(P)=\underset{R\in\mathcal R_p}{\max}\min\left\{\dfrac{\left|E\vee a_j\right|^2}{|E|^2};j\not\in R\right\}. $$
Then $\psi_p(P)$ depends only on $P$. We may regard $\psi_p$ as a function on the Grassmannian manifold $G(M,p)$. For each nonzero $(k+1)$-vector $E=E_0\wedge E_1\wedge\ldots\wedge E_k$ we set
$$R=\{j\in Q; E\vee a_j=0\}.$$
Note that $E\vee a_j=0$ if and only if $a_j$ is contained in the orthogonal complement of the vector space $\mathrm{Span}(E_0,\ldots, E_k)$. Then we see
$$\rank_{\C}\{a_j\ |\ j\in R\}=\dim\mathrm{Span}(a_j; j\in R)\le M - p,$$
namely, $R\in\mathcal R_p$. This yields that $\psi_p$ is positive everywhere on $G(M,p)$.
Since $\psi_p$ is obviously continuous and $G(M,p)$ is compact, we can take a positive constant $\delta$ such that $\psi_p(P)>\delta$ for each $P\in G(M,p)$.

Take a point $z$ with $F_{p}(z)=0$. The vector space generated
by $F^{(0)}(z), F^{(1)}(z)$, ..., $F^{(p)}(z)$ determines a point in $G(M,p)$. Therefore, there is a set $R$ in $\mathcal R_p$ with 
$$ \rank_{\C}\{a_j\ |\ j\in R\}\le M-p $$ 
such that $\varphi_{p}(Q_j)(z)\ge\delta$ for all $j\not\in R$. Then, we can choose a finite positive constant $K$ depending only on $Q_j\ (1\le j\le q)$ such that $\Phi_{jp}(z)\le K$ for all $j\not\in R$. Set
$$T:= \{j| \Phi_{jp}(z)> K\}.$$

We consider the following two cases.

\textit{Case 1: }Assume that $T$ is an empty set. We have
\begin{align*}
\sum_{j=1}^q\Phi_{jp}&\ge  q\left (\prod_{j=1}^q\Phi_{jp}\right)^{\frac{1}{q}}\ge C_1\left (\prod_{j=1}^q\frac{\Phi_{jp}}{K}\right)^{\frac{1}{\Delta(M-p)}},
\end{align*}
because $q\ge\Delta(M+1)>\Delta(M-p)$. Here $C_1$ is a  positive constant which depends only on $Q_1,\ldots,Q_q$.

\textit{Case 2: }Assume that $T\ne\emptyset$. We see that $T\subset R$ and so $\rank_{\C}\{a_j\ |\ j\in T\}\le M-p$ holds. It yields that $\sharp T\le\Delta(M-p)$. Then, we have
\begin{align*}
\sum_{j=1}^q\Phi_{\mathcal V,jp}&\ge\sum_{j\in T}\Phi_{jp}\ge \sharp TK\prod_{j\in T}\left(\frac{\Phi_{jp}}{K}\right)^{1/\sharp T}\ge C_2\left(\prod_{j=1}^q\frac{\Phi_{jp}}{K}\right)^{1/\Delta(M-p)},
\end{align*}
for some positive constant $C_2>0$, which depends only on $Q_1,\ldots,Q_q$.

From the above two cases, we get the conclusion of the theorem.
\end{proof}

\begin{theorem}\label{3.5}
Let the notations and the assumption be as in Theorem \ref{3.4}. Then, for every $\epsilon>0$, there exist a positive number $\delta\ (>1)$ and a positive constant $C$, depending only on $\epsilon$ and $Q_j\ (1\le j\le q)$ such that
\begin{align*}
dd^c&\log\dfrac{\prod_{p=0}^{M-1}|F_{p}|^{2\epsilon}}{\prod_{1\le j\le q,0\le p\le M-1}\log^{2/\Delta}\left(\delta/\varphi_{p}(Q_j)\right)}\\
&\ge C\left (\dfrac{|F_{0}|^{2\left (\frac{q}{\Delta}-M-1\right)}|F_{M}|^2}{\prod_{j=1}^q(|F_{0}(Q_j)|^2\prod_{p=0}^{M-1}\log^2(\delta/\varphi_{p}(Q_j)))^{1/\Delta}}\right)^{\frac{2}{M(M+1)}}dd^c|z|^2.
\end{align*}
\end{theorem}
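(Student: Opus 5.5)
This is the standard Fujimoto/Ru "sum into product" argument, with Theorem \ref{3.4} supplying the combinatorial input. The plan has four steps: a per-level estimate from Proposition \ref{3.3}, a product over levels using Lemma \ref{3.2}, a telescoping of the $\varphi$-ratios, and a final normalisation of exponents.

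\emph{Step 1 (estimate at a fixed level $p$).} Fix $p$ with $0\le p\le M-1$. Since $dd^c\log|F_p|^2=\Omega_p$, we have
\[
dd^c\log\dfrac{|F_p|^{2\epsilon}}{\prod_j\log^{2/\Delta}(\delta/\varphi_p(Q_j))}=\epsilon\Omega_p+\dfrac{2}{\Delta}\sum_j dd^c\log\dfrac{1}{\log(\delta/\varphi_p(Q_j))}.
\]
Apply Proposition \ref{3.3} with $\epsilon'=\epsilon\Delta/(2q)$ and choose $\delta$ large. The $-\epsilon'\Omega_p$ terms cancel against $\epsilon\Omega_p$, which leaves
\[
\ge \dfrac{2}{\Delta}\sum_j \dfrac{\varphi_p(Q_j)}{\varphi_{p+1}(Q_j)\log(\delta/\varphi_p(Q_j))}\,\Omega_p .
\]
The quantity produced by Proposition \ref{3.3} is not exactly $\Phi_{jp}$. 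I would handle this either by using the version of Proposition \ref{3.3} with $\varphi_{p+1}/\varphi_p$ in the right place, or by bounding $\varphi_{p+1}/(\varphi_p\log^2)$ by the relevant term, using $\varphi\le 1$ and $\log(\delta/\varphi)\ge\log\delta>1$. Either way we get $\gtrsim\sum_j\Phi_{jp}\,\Omega_p$, and Theorem \ref{3.4} then gives
\[
\ge C\Bigl(\prod_j\Phi_{jp}\Bigr)^{1/\Delta(M-p)}h_p\,dd^c|z|^2 .
\]
Reconciling the exact form of the Fujimoto inequality with the definition of $\Phi_{jp}$ is where I expect the main care to be needed.

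\emph{Step 2 (combining the levels).} Sum over $p$, which on the left gives exactly the $dd^c\log$ of the stated quotient. On the right, apply the arithmetic–geometric mean inequality with weights $\lambda_p=\frac{2(M-p)}{M(M+1)}$, which satisfy $\sum_p\lambda_p=1$:
\[
\sum_p A_p\ \ge\ \prod_p A_p^{\lambda_p}\quad(\text{up to a constant}).
\]
Each factor then becomes $\bigl(\prod_j\Phi_{jp}\bigr)^{2/(\Delta M(M+1))}h_p^{\lambda_p}$, so the exponents of the $\Phi$'s are uniform in $p$.

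\emph{Step 3 (telescoping).} For each $j$ we have $\prod_{p=0}^{M-1}\varphi_{p+1}(Q_j)/\varphi_p(Q_j)=\varphi_M(Q_j)/\varphi_0(Q_j)$. Here $\varphi_M(Q_j)$ is a nonzero constant, namely $|a_j|^2$ up to normalisation, and $\varphi_0(Q_j)=|F_0(Q_j)|^2/|F_0|^2$. This produces the factor
\[
\Bigl(|F_0|^{2q}\big/\prod_j|F_0(Q_j)|^2\Bigr)^{1/\Delta}
\]
together with the $\log^2$ factors of the statement. For the $h_p$ part, Lemma \ref{3.2} gives
\[
\prod_p h_p^{M-p}=\prod_{p=0}^{M-1}\Bigl(\dfrac{|F_{p-1}|\,|F_{p+1}|}{|F_p|^2}\Bigr)^{2(M-p)}.
\]
The exponent of $|F_p|$ is $2(M-p+1)+2(M-p-1)-4(M-p)=0$ for $1\le p\le M-1$, so the product telescopes. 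What remains is $|F_M|^2/|F_0|^{2(M+1)}$, where $|F_0|$ carries exponent $-2(M+1)$ and $|F_{-1}|=1$.

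\emph{Step 4 (collecting exponents).} Everything is now raised to the power $\frac{2}{M(M+1)}$, and combining gives
\[
|F_0|^{2(q/\Delta-M-1)}\,|F_M|^2\Big/\prod_j\bigl(|F_0(Q_j)|^2\textstyle\prod_p\log^2\bigr)^{1/\Delta},
\]
which is the asserted bound. The inequality is first obtained off the zero sets of the $\varphi_p(Q_j)$ and of $F_p$. It extends to all of $\C$ in the sense of currents, because the left-hand side is a locally integrable potential and the right-hand side vanishes or is integrable near those points.
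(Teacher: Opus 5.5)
Your proposal is correct and is essentially the paper's proof: Fujimoto's inequality at each level $p$, then Theorem \ref{3.4}, then the weighted AM--GM with weights $2(M-p)/(M(M+1))$, and finally the telescoping of $\prod_p\varphi_{p+1}/\varphi_p$ and of $\prod_p h_p^{M-p}$ via Lemma \ref{3.2}. In Step 1 only your first fix is valid (the paper's proof uses Fujimoto's inequality with $\varphi_{p+1}(Q)/(\varphi_p(Q)\log^2(\delta/\varphi_p(Q)))$, so Proposition \ref{3.3} as printed is a typo); your fallback cannot work, because $\varphi_p/\varphi_{p+1}$ and $\varphi_{p+1}/\varphi_p$ are reciprocals, and the former tends to $0$ where $\varphi_p(Q_j)\to 0$ while $\varphi_{p+1}(Q_j)$ stays bounded below.
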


\begin{proof}
We denote by $A$ the left-hand side. Then we have
$$A=\epsilon\sum_{p=0}^{M-1}\Omega_p+\sum_{j=1}^q \sum_{p=0}^{M-1}dd^c\log\dfrac{1}{\log^{2}\left (\delta/\varphi_{p}(Q_j)\right)}.$$
Choose a positive number $\delta_0(\epsilon/\ell)$ with the properties as in Proposition \ref{3.3}, where $\ell=q/\Delta$. For an arbitrarily fixed $\delta\ge\delta_0(\epsilon/\ell)$, we obtain
\begin{align*}
A&\ge\epsilon\sum_{p=0}^{M-1}\Omega_p+\sum_{j=1}^q\sum_{p=0}^{M-1}\left (\dfrac{2\varphi_{p+1}(Q_j)}{\varphi_{p}(Q_j)\log^2(\delta/\varphi_{p}(Q_j))}-\dfrac{\epsilon}{\ell}\right)\Omega_p\\ 
& =\sum_{p=0}^{M-1}2\left (\sum_{j=1}^q\Phi_{jp}\right).
\end{align*}
Then, by Theorem \ref{3.4} we have
$$ A\ge C_1\sum_{p=0}^{M-1}2\left (\prod_{j=1}^q\Phi_{jp}\right)^{\frac{1}{\Delta(M-p)}}\Omega_p, $$
for some positive constant $C_1>0$. Let $\Omega_p=h_pdd^c|z|^2$, we have
\begin{align*}
A&\ge 2C_1\sum_{p=0}^{M-1}\left (h_p^{M-p}\prod_{j=1}^q\Phi_{jp}^{1/\Delta}\right)^{\frac{1}{M-p}}dd^c|z|^2\\ 
& \ge C'_1\sum_{p=0}^{M-1}(M-p)\left (h_p^{M-p}\prod_{j=1}^q\Phi_{jp}^{1/\Delta}\right)^{\frac{1}{M-p}}dd^c|z|^2\\ 
&\ge C_2\prod_{p=0}^{M-1}\left (h_p^{M-p}\prod_{j=1}^q\Phi_{jp}^{1/\Delta}\right)^{\frac{2}{M(M+1)}}dd^c|z|^2,
\end{align*}
for some positive constants $C'_1>0,C_2>0$. On the other hand, we note that
\begin{align*}
\prod_{p=0}^{M-1}\Phi_{jp}&=\prod_{p=0}^{M-1}\dfrac{\varphi_{p+1}(Q_j)}{\varphi_{p}(Q_j)}\dfrac{1}{\log^2(\delta/\varphi_{p}(Q_j))}\\ 
& =\dfrac{|F_{0}|^2}{|F_{0}(Q_j)|^2}\prod_{p=0}^{M-1}\dfrac{1}{\log^2(\delta/\varphi_{p}(Q_j))}
\end{align*}
and
$$ \prod_{p=0}^{M-1}h_p^{M-p}=\prod_{p=0}^{M-1}\left (\dfrac{|F_{p-1}|^2|F_{p+1}|^2}{|F_{p}|^4}\right)^{M-p}=\dfrac{|F_{M}|^2}{|F_{0}|^{2(M+1)}},$$
because $\varphi_{0}(Q_j)=|F_{0}(Q_j)|/|F_{0}|,\varphi_{M}(Q_j)=1$. Therefore, we get
$$ A\ge C\left (\dfrac{|F_{0}|^{2(\ell-M-1)}|F_{M}|^2}{\prod_{j=1}^q(|F_{0}(Q_j)|^2\prod_{p=0}^{M-1}\log^2(\delta/\varphi_{p}(Q_j)))^{1/\Delta}}\right)^{\frac{2}{M(M+1)}}dd^c|z|^2.$$
This completes the proof of the theorem.
\end{proof}

Let
$$\hat{h} := \dfrac{\prod_{p=0}^{M-1}|F_{p}|^{2\epsilon}}{\prod_{1\le j\le q,0\le p\le M-1}\log^{2/\Delta}\left(\delta/\varphi_{p}(Q_j)\right)}.$$
Then we have the following important corollary.

\begin{corollary}\label{3.6}
With the above notations, we have
$$dd^c \log \hat{h} \geq C\left (\dfrac{|F_{0}|^{2((q/\Delta)-M-1)}|F_{M}|^2\cdot\hat{h}}{\prod_{p=0}^{M-1}|F_{p}|^{2\epsilon}\prod_{j=1}^q(|F_{0}(Q_j)|^2)^{1/\Delta}}\right)^{\frac{2}{M(M+1)}}dd^c|z|^2,$$
or, equivalently,
\begin{align}\label{3}
\frac{1}{A} \log \hat{h} \geq C h^*, 
\end{align}
where
$$h^* := \left (\dfrac{|F_{0}|^{2((q/\Delta)-M-1)}|F_{M}|^2\cdot\hat{h}}{\prod_{p=0}^{M-1}|F_{p}|^{2\epsilon}\prod_{j=1}^q(|F_{0}(Q_j)|^2)^{1/\Delta}}\right)^{\frac{2}{M(M+1)}}.$$
\end{corollary}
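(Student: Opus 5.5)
The corollary is a direct rewriting of Theorem \ref{3.5}, so the plan is to substitute the definition of $\hat h$ into its right-hand side. The left-hand side of Theorem \ref{3.5} is exactly $dd^c\log\hat h$. The work is therefore in showing that the quantity inside the large parentheses there equals the quantity inside $h^*$.

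The key algebraic step is to solve the definition of $\hat h$ for the logarithmic factors:
$$\prod_{1\le j\le q,\,0\le p\le M-1}\log^{2/\Delta}\left(\delta/\varphi_{p}(Q_j)\right)=\dfrac{\prod_{p=0}^{M-1}|F_{p}|^{2\epsilon}}{\hat h}.$$
Next, split the denominator in Theorem \ref{3.5} as
$$\prod_{j=1}^q\Big(|F_{0}(Q_j)|^2\prod_{p=0}^{M-1}\log^2(\delta/\varphi_{p}(Q_j))\Big)^{1/\Delta}=\prod_{j=1}^q(|F_{0}(Q_j)|^2)^{1/\Delta}\cdot\prod_{j,p}\log^{2/\Delta}(\delta/\varphi_{p}(Q_j)),$$
and substitute the previous identity into the second factor. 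The base of the power then becomes
$$\dfrac{|F_{0}|^{2((q/\Delta)-M-1)}|F_{M}|^2\cdot\hat h}{\prod_{p=0}^{M-1}|F_{p}|^{2\epsilon}\prod_{j=1}^q(|F_{0}(Q_j)|^2)^{1/\Delta}}.$$
Raising this to the power $\frac{2}{M(M+1)}$ gives the first displayed inequality of the corollary, with the same constant $C$ (and the same $\delta$) as in Theorem \ref{3.5}.

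For the ``equivalent'' form (\ref{3}), write $dd^c\log\hat h=\frac{1}{4\pi}\Delta\log\hat h\,\frac{i}{2}dz\wedge d\bar z$ (up to the normalization fixed for $dd^c$) and $dd^c|z|^2=c_0\,\frac{i}{2}dz\wedge d\bar z$. Comparing coefficients of the area form, one reads (\ref{3}) as a pointwise inequality $\frac{1}{A}\Delta\log\hat h\ge C h^*$, where the normalizing constant is absorbed into $A$ and $C$. The $\frac{1}{A}\log\hat h$ in the statement is the Laplacian form written with a typographical shortcut.

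None of this is a real obstacle. The one point needing care is that the inequality is only meaningful as an inequality of currents, or pointwise off the discrete set where some $\varphi_p(Q_j)$ vanishes or $F_p$ vanishes. Off that set all quantities are smooth and positive, since $\delta>1$ makes the logarithms positive, so the algebraic substitution is legitimate there. Across the singular set, $\log\hat h$ differs from a smooth function by $\epsilon\sum_p\log|F_p|^2$, which is plurisubharmonic, minus $\log\log$-type terms that carry no point mass. Hence the current inequality holds globally, exactly as it does for Theorem \ref{3.5}.
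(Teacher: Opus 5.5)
Your proposal is correct and matches the paper's intent: the paper states this corollary without proof as an immediate rewriting of Theorem \ref{3.5}, obtained exactly as you do by substituting $\prod_{j,p}\log^{2/\Delta}(\delta/\varphi_p(Q_j))=\prod_{p}|F_p|^{2\epsilon}/\hat{h}$ into the denominator, and the remark about the singular set is a harmless addition the paper omits. Your reading of (\ref{3}) as a pointwise Laplacian inequality is also how the paper later uses it in the proof of Theorem \ref{3.7}, namely as $\frac{1}{4}\Delta\log\hat{h}\ge Ch^*$, so $A=4$ under the normalization $dd^c|z|^2=\frac{\sqrt{-1}}{2\pi}dz\wedge d\bar{z}$.
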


\subsection*{3.3 Second main theorem concerning the Wronskian}
We recall the following formulas in Nevanlinna theory. For a homogeneous polynomial $Q$ in $\C[x_0,\ldots,x_n]$, the proximity function of $f$ with respect to $Q$ is given by
$$m_f(r,Q)=\frac12 \int_{0}^{2\pi} \log \varphi_0(Q)(re^{i\theta})\, \frac{d\theta}{2\pi}.$$
The height function of $F$ is defined by
$$
T_{F_k}(r) = \int_{0}^{r}\frac{dt}{t} \int_{|z|<t}F_k^*\omega
=\int_{|z|<r}\log\frac{r}{|z|} F_k^*\omega.
$$
Let $X_t$ be a Brownian motion on the complex plane $\bbC$. Let $\tau_r := \inf\{t>0,\,|X_t| \geq r\}$ be the stopping time. From the inequality (\ref{1}) with $D_r = \{z \mid |z| < r\}$, we have
$$m_f(r,Q) = \frac{1}{2}\bbE_{0}[\log\varphi_{0}(Q)(X_{\tau_r})],$$
and from the inequality (\ref{2}) with $D_r = \{z \mid |z| < r\}$, we have
$$T_{F_k}(r) = \frac{1}{2}\bbE_{0} \left[ \int_{0}^{\tau_r} e_k(f)(X_s)\,ds \right],$$
where $e_k(f) = \frac{2 F_k^*\omega}{\phi_0}$ and $\phi_0 = \frac{\sqrt{-1}}{2\pi}dz\wedge d\bar{z}= dd^c |z|^2$.

\noindent
\begin{theorem}[Second main theorem with counting function of Wronskian]\label{3.7}
Let $f$ be a holomorphic map from $\C$ into a projective subvariety $V$ of $\P^n(\C)$ with a reduced representation $\tilde f=(f_0,\ldots,f_n)$ and let $\{Q_1,\ldots,Q_q\}$ be a family of $q$ nonzero homogeneous polynomials in $\C[x_0,\ldots,x_n]\setminus I(V)$ which has distributive constant $\Delta$ with respect to $V$. Let $d=lcm(\deg Q_1,\ldots,\deg Q_q)$, $M=H_V(d)-1,$ and $v_0,\ldots,v_M$ is a basis of $I_d(V)$. Assume that $f$ is nondegenerate over $I_d(V)$ and let  $W = W(v_0(\tilde f),\dots,v_M(\tilde f))$ be the Wronskian determinant. Then, for any positive $\epsilon$ and $\delta$, we have
$$\|\,(q-\Delta(M+1+\epsilon)T_f(r)\le\sum_{j=1}^q\frac{1}{\deg Q_j}N_f(r,Q_j)+\Delta N_W(r,0)+\Delta\delta \log r + O(1).$$
\end{theorem}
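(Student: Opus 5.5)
First replace each $Q_j$ by $Q_j^{d/\deg Q_j}$. This puts all polynomials in $H_d$ and changes nothing essential. The hypersurfaces $D_j$ are the same sets, so $\Delta$ is unchanged. Also $N_f(r,Q_j^{d/\deg Q_j})=\frac{d}{\deg Q_j}N_f(r,Q_j)$, so after dividing by $d$ we recover the weights $1/\deg Q_j$. We are then in the setting of Theorem \ref{3.4}, Theorem \ref{3.5} and Corollary \ref{3.6}. We may assume $q>\Delta(M+1)$; otherwise the left-hand side is nonpositive and the statement is trivial, since $N_W$ and the counting functions are nonnegative up to $O(1)$. Note also that $|F_0|=\|F\|$ is comparable to $\|\tilde f\|^d$, because the $v_i$ have no common zero on $V$. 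Hence $\log|F_0|$ and $d\log\|\tilde f\|$ differ by $O(1)$, and their expectations differ by $O(1)$ as well.

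The core is the stochastic Ahlfors-type argument of \cite{RCD}. Starting from Corollary \ref{3.6}, with $\frac{1}{4}\Delta_{\mathrm{Lap}}\log\hat h\ge C h^*$ read pointwise, integrate along the Brownian path up to $\tau_r$. This gives $\mathbb E_0\int_0^{\tau_r}h^*(X_s)ds\le C'\mathbb E_0\int_0^{\tau_r}\Delta\log\hat h(X_s)\,ds$. Lemma \ref{2.3}, applied to $\log\hat h$, bounds the right-hand side by $\mathbb E_0[\log\hat h(X_{\tau_r})]$ plus counting terms of $\hat h$ and $O(1)$. Those counting terms come only from the zeros of the $F_p$, with coefficient $\epsilon$, and by Lemma \ref{3.1} they are $O(\epsilon T_f(r))$. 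For the boundary term, the log-log factors only help. The factor $\epsilon\sum_p\log|F_p|^2$, after Jensen's inequality (Lemma \ref{2.1}) and Lemma \ref{3.1}, is $\le \epsilon' T_f(r)+O(\log T_f(r))+\delta\log r$. So $\log\mathbb E_0\int_0^{\tau_r}h^*\,ds=O(\log T_f(r))+O(\log r)$ outside a finite-measure set. Lemma \ref{2.4} then transfers this to $\log\mathbb E_0[h^*(X_{\tau_r})]\le O(\log T_f(r))+\delta\log r$. By Jensen again, concavity of $\log$ gives $\mathbb E_0[\log h^*(X_{\tau_r})]\le\log\mathbb E_0[h^*(X_{\tau_r})]$.

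Next expand $\mathbb E_0[\log h^*(X_{\tau_r})]$. Multiplying by $\frac{M(M+1)}{2}$, it contains:
\begin{itemize}
\item $(q/\Delta-M-1)\,\mathbb E\log|F_0|^2 = 2d(q/\Delta-M-1)T_f(r)+O(1)$;
\item $\mathbb E\log|F_M|^2$, which by Jensen's formula (Lemma \ref{2.3} with $u=W$; note $F_M=W$ up to a constant) equals $2N_W(r,0)+O(1)$ since $W$ is entire;
\item $-\frac1\Delta\sum_j\mathbb E\log|F_0(Q_j)|^2=-\frac{2}{\Delta}\sum_jN_f(r,Q_j)+O(1)$;
\item the $\log\hat h$ and $\epsilon\log|F_p|$ pieces, which contribute $O(\epsilon T_f(r)+\log T_f(r))$ by Lemma \ref{3.1}.
\end{itemize}
Here the sign of the $N_W$ term must come out so that $N_W$ ends up on the right side of the inequality. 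Rearranging, multiplying by $\Delta$, dividing by $d$, absorbing $O(\log T_f)$ into $\epsilon T_f$, and rescaling $\epsilon,\delta$ gives the stated inequality.

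The main obstacle is the careful sign and bookkeeping in applying Lemma \ref{2.3} to the singular function $\log\hat h$. Its Laplacian is only bounded below in the current sense. We also need the integrability hypothesis of Lemma \ref{2.3}, and need to check that the singular contributions from zeros of $F_p$ and from $\varphi_p(Q_j)=0$ have the right sign or are controlled by $\epsilon T_f(r)$ via Lemma \ref{3.1}. I would handle this as in \cite{RCD}: use $\log\log(\delta/\varphi_p)\ge0$ and $\varphi_p\le1$ to discard the log-log terms, and bound the remaining $\epsilon$-weighted terms by Lemma \ref{3.1}, paying a $\delta\log r$ error.
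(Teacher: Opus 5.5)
Your proposal is correct and follows essentially the paper's route: integrate Corollary \ref{3.6} along the Brownian path, apply Lemma \ref{2.3} to $\log\hat h$, use Lemmas \ref{2.4} and \ref{3.1} together with concavity of $\log$, then expand $\mathbb{E}_0[\log h^*(X_{\tau_r})]$; the only slip is that the intermediate bound you need is $\log\mathbb{E}_0\int_0^{\tau_r}h^*\,ds\le O(\log T_f(r))+O(\log\log r)$, not merely $O(\log r)$, since that is what lets Lemma \ref{2.4} return only $\delta\log r$. On the sign you flagged: $2N_W(r,0)$ enters a quantity bounded above, so you actually get the stronger inequality with $-\frac{\Delta}{d}N_W(r,0)$ on the right (as the paper's proof concludes and uses in (\ref{4})), and the displayed form with $+\Delta N_W(r,0)$ follows because $N_W(r,0)\ge 0$.
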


\begin{proof} Without loss of generality, we may assume that $\deg Q_j=d$ for all $1\le j\le q$. With the notation as in Subsection 3.2, we define the functions
$$\hat{h} := \dfrac{\prod_{p=0}^{M-1}|F_{p}|^{2\epsilon}}{\prod_{1\le j\le q,0\le p\le M-1}\log^{2/\Delta}\left(\delta/\varphi_{p}(Q_j)\right)}$$
and
\begin{align*}
{h}^*&=\left (\dfrac{|F_{0}|^{2(\ell-M-1)}|F_{M}|^2\cdot\hat{h}}{\prod_{p=0}^{M-1}|F_{p}|^{2\epsilon}\prod_{j=1}^q(|F_{0}(Q_j)|^2)^{1/\Delta}}\right)^{\frac{2}{M(M+1)}}\\
&=\left (\dfrac{|F_{0}|^{-2(M+1)}|F_{M}|^2\cdot\hat{h}}{\prod_{p=0}^{M-1}|F_{p}|^{2\epsilon}\prod_{j=1}^q(\varphi_0(Q_j))^{1/\Delta}}\right)^{\frac{2}{M(M+1)}},
\end{align*}
where $\ell=q/\Delta$. Then, from the definition, for $\epsilon'>0$ we have
\begin{align*}
\frac{M(M+1)}{4} \bbE_0[\log {h}^*(X_{\tau_r})]&= \sum_{j=1}^{q}\frac{1}{\Delta}m_f(r,Q_j)- (M+1)dT_f(r) + \bbE_0[|F_M|(X_{\tau_r})]\\
&-\epsilon'\sum_{k=0}^{M-1} \bbE_0[\log|F_k|(X_{\tau_r})] + \frac{1}{2} \bbE_0[\log \hat{h}(X_{\tau_r})].
\end{align*}
Notice that $|F_M|=|W|$. So, from Lemma \ref{2.3} and notice that $\Delta \log \|F_M\| = 0$, we get
$$\bbE_0[\log|F_M|(X_{\tau_r})] = N_W(r,0).$$
Also from Lemma \ref{2.3} and Lemma \ref{3.1}, for $\delta'>0$ we have
$$\|\,\bbE_0[\log|F_k|(X_{\tau_r})] = N_{d_k}(r,0) + T_{F_k}(r) \leq 2(M+1)^2dT_f(r) + O(\log T_f(r)) + \delta'\log r.$$
Hence, by choosing $\epsilon'=\frac{\epsilon}{2(M+1)^2+1}$ and $\delta'=\frac{\delta}{2\epsilon'}$, we get
\begin{align*}
\bigl\|\, \frac{M(M+1)}{4} \bbE_0[\log{h}^*(X_{\tau_r})]& \geq \sum_{j=1}^q\frac{1}{\Delta}m_f(r,Q_j) -(M+1+\epsilon)dT_f(r) + N_W(r,0)\\
&-\frac{\delta}{2}\log r+\frac{1}{2}\bbE_0[\log \hat{h}(X_{\tau_r})].
\end{align*}
We need to estimate $\bbE_0[\log{h}^*(X_{\tau_r})]$. From Jensen's inequality,
$$\bbE_0[\log{h}^*(X_{\tau_r})] \leq \log \bbE_0[{h}^*(X_{\tau_r})].$$
From Lemma \ref{2.4} and the inequality (\ref{3}), we have
\begin{align*}
\|\, \log \bbE_0[{h}^*(X_{\tau_r})]&\leq (1+\delta)^2 \log^+ \bbE_0 \left[ \int_{0}^{\tau_r} h^*(X_s)\, ds \right] + \delta \log r\\
&\leq (1+\delta)^2 \log^+ \bbE_0 \left[ \int_{0}^{\tau_r} (\Delta\log\hat{h})(X_s)\, ds \right] + \delta \log r+O(1).
\end{align*}
Using Lemma \ref{2.3} again with $\phi = \log \hat{h}$ and $T = \tau_r$, and noting that $N_{\hat{h}}(r,0) = 0$, we get
$$
\bbE_0[\log \hat{h}(X_{\tau_r})] \ge\bbE_0 \left[ \int_{0}^{\tau_r} (\Delta \log \hat{h})\, ds \right].
$$
Hence
$$\|\, \log \mathrm{E}_0\left[h^*\left(X_{\tau_r}\right)\right] \leqslant(1+\delta)^2 \log { }^{+} \mathrm{E}_0\left[\log \hat{h}\left(X_{\tau_r}\right)\right]+\delta \log r+O(1).$$
Thus
\begin{align*}
\|\,  \sum_{j=1}^q\frac{1}{\Delta}m_f(r,Q_j)& -(M+1+\epsilon)dT_f(r) + N_W(r,0)-\frac{\delta}{2}\log r+\frac{1}{2}\bbE_0[\log \hat{h}(X_{\tau_r})]\\
&\leq (1+\delta)^2 \log^+ \bbE_0 \left[ \int_{0}^{\tau_r} (\Delta \log \hat{h})(X_s) \, ds \right] + \delta \log r + O(1).
\end{align*}
Notice that for any constant $C>0$, the quantity $C \log \bbE_0[\hat{h}(X_{\tau_r})] - \bbE_0[\log \hat{h}(X_{\tau_r})]$ is bounded from above (for sufficiently large
$r$). Then we have
$$\|\,\sum_{j=1}^q\frac{1}{\Delta}m_f(r,Q_j) + N_W(r,0) \leq (M+1+\epsilon)dT_f(r) + \delta \log r + O(1),$$
i.e.,
$$\|\,(q-\Delta(M+1+\epsilon))T_f(r)\le\sum_{j=1}^q\frac{1}{\deg Q_j}N_f(r,Q_j)-\frac{\Delta}{d}N_W(r,0)+\Delta\delta \log r + O(1).$$
We get the desired second main theorem.
\end{proof}

\section{Proof of main results}

We shall need the following lemma.

\begin{lemma}[{see \cite[Lemma 3.1]{Q21}}]\label{4.1}
Let $t_0,t_1,\ldots,t_n$ be $n+1$ integers such that $1=t_0<t_1<\cdots <t_n$, and let $\Delta =\underset{1\le s\le n}\max\dfrac{t_s-t_0}{s}$. 
Then for every $n$ real numbers $a_0,a_1,\ldots,a_{n-1}$  with $a_0\ge a_1\ge\cdots\ge a_{n-1}\ge 1$, we have
$$ a_0^{t_1-t_0}a_1^{t_2-t_1}\cdots a_{n-1}^{t_{n}-t_{n-1}}\le (a_0a_1\cdots a_{n-1})^{\Delta}.$$
\end{lemma}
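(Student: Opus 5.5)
The plan is to pass to logarithms and use Abel summation, which turns the inequality into a sum of termwise comparisons, each of which is exactly the defining property of $\Delta$.

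First set $x_i=\log a_i$ for $0\le i\le n-1$. The hypothesis $a_0\ge\cdots\ge a_{n-1}\ge 1$ becomes $x_0\ge x_1\ge\cdots\ge x_{n-1}\ge 0$. The claimed inequality is then equivalent to
$$\sum_{i=0}^{n-1}(t_{i+1}-t_i)\,x_i\le \Delta\sum_{i=0}^{n-1}x_i .$$

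Next, put $x_n:=0$ and $y_s:=x_s-x_{s+1}$ for $0\le s\le n-1$. Monotonicity and nonnegativity give $y_s\ge 0$, and $x_i=\sum_{s=i}^{n-1}y_s$. Substituting and exchanging the order of summation gives the left-hand side
$$\sum_{s=0}^{n-1}y_s\sum_{i=0}^{s}(t_{i+1}-t_i)=\sum_{s=0}^{n-1}y_s\,(t_{s+1}-t_0),$$
by telescoping. The same substitution gives the right-hand side
$$\Delta\sum_{s=0}^{n-1}(s+1)\,y_s .$$

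Finally, by the definition of $\Delta$ we have $t_{s+1}-t_0\le \Delta(s+1)$ for every $0\le s\le n-1$, where $s+1$ ranges over $1,\ldots,n$. Multiplying by $y_s\ge 0$ and summing yields the inequality; exponentiating gives the lemma.

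There is no real obstacle here. The only point requiring care is choosing the Abel-summation weights $y_s$ so that nonnegativity is preserved. This is exactly where the monotonicity of the $a_i$ and the condition $a_{n-1}\ge 1$ (which gives $y_{n-1}=x_{n-1}\ge 0$) are used.
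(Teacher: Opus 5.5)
Your proof is correct and complete. Setting $y_s=x_s-x_{s+1}\ge 0$ (with $x_n:=0$), Abel summation writes both sides as nonnegative combinations of the $y_s$. The comparison then reduces, term by term, to the defining inequalities $t_{s+1}-t_0\le (s+1)\Delta$.

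The paper gives no proof of its own here; it imports the lemma from \cite[Lemma 3.1]{Q21}. Your argument is the natural one for this fact. Written multiplicatively, it is the telescoping identity $\prod_{i=0}^{n-1}a_i^{t_{i+1}-t_i}=\prod_{s=0}^{n-1}\bigl(a_s/a_{s+1}\bigr)^{t_{s+1}-t_0}$ with $a_n:=1$. Each factor is then bounded by $\bigl(a_s/a_{s+1}\bigr)^{(s+1)\Delta}$, and these bounds multiply to $(a_0\cdots a_{n-1})^{\Delta}$.

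Your argument also shows that neither the integrality nor the monotonicity of the $t_i$ is actually needed.
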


\begin{proof}[Proof of Theorem \ref{1.1}]
Let $Q_1,\ldots,Q_q$ be the homogeneous polynomials defining $D_1,\ldots,D_q$ respectively. Without loss of generality, we may assume that $\deg Q_j=d$ for all $1\le j\le q$.  Using the notation in the proof of Theorem \ref{3.7}, we have
\begin{align}\label{4}
\|\,(q-\Delta(M+1+\epsilon))T_f(r)\le\sum_{j=1}^q\frac{1}{ d}N_f(r,Q_j)-\frac{\Delta}{d}N_W(r,0)+\Delta\delta \log r + O(1),
\end{align}
where $M=H_V(d)-1.$ Let $\tilde f=(f_0,\ldots,f_n)$ be a reduced representation of $f$.We now establish the following inequality for the counting functions of divisors:
 $$\sum_{i=1}^qN_{Q_i(f)}(r)-\Delta N_{W}(r)\le \sum_{i=1}^qN^{[M]}_{Q_i(\tilde f)}(r).$$

Indeed, let $z$ be a zero of some $Q_i(\tilde f)$. We may assume that 
$$\nu^0_{Q_1(\tilde f)}(z)\ge \nu^0_{Q_2(\tilde f)}(z)\ge\ldots\ge\nu^0_{Q_\ell(\tilde f)}(z)>0=\nu^0_{Q_{\ell+1}(\tilde f)}(z)=\cdots=\nu^0_{Q_q(\tilde f)}(z).$$
Note that if $q>\Delta (M+1)$ then $\frac{\ell}{k}\le\Delta<\frac{q}{M+1},$ i.e., $\ell<\frac{kq}{M+1}\le q-1.$ Then, there are indices $i_0,\ldots,i_p,i_{p+1}$ such that $1=i_0<i_1<i_2<\cdots<i_p\le\ell<i_{p+1}$ and
$$\rank\{Q_i;1\le i\le t\}=s\ \forall\ i_{s}\le t\le i_{s+1}-1,\, 0\le s\le p.$$
Since $\{Q_i\}_{i=1}^q$ has distribution constant $\Delta$, we have 
$$\Delta\ge\frac{i_s-1}{k-\dim (\bigcap_{1\le j\le i_s-1}D_{i_j}\cap V)}\ge\frac{i_s-i_0}{\rank\{Q_i;1\le i\le i_s-1\}}=\frac{i_s}{s}\, \forall 1\le s\le p+1.$$

We set $t_i=\nu^0_{Q_i(\tilde f)}(z)$ for $i=1,\ldots,\ell$. Then, by Lemma \ref{4.1} we have
\begin{align*}
\sum_{i=1}^q\max\{0,\nu^0_{Q_i(\tilde f)}(z)-M\}
&\le\sum_{s=0}^p(i_{s+1}-i_s)\max\{0,\nu^0_{Q_{i_s}(\tilde f)}(z)-M\}\\
&\le\Delta(\sum_{s=0}^p\max\{0,\nu^0_{Q_{i_s}(\tilde f)}(z)-M\}).
\end{align*}
We take $M-p$ homogeneous polynomials $P_{p+1},\ldots,P_M$ of degree $d$ in variables $(x_0,\ldots,x_n)$ such that $([Q_{i_0}],\ldots,[Q_{i_p}],[P_{p+1}],\ldots,[P_M])$ is a basis of $I_d(V)$. Then, by usual argument in Nevanlinna theory, we have
\begin{align*}
\nu^0_W(z)&=\nu^0_{W(Q_{i_0}(\tilde f),\ldots,Q_{i_p}(\tilde f),P_{p+1}(\tilde f),\ldots, P_M(\tilde f))}(z)\\
&\ge\sum_{s=0}^p\max\{0,\nu^0_{Q_{i_s}(\tilde f)}(z)-M\}+\sum_{s=p+1}^M\max\{0,\nu^0_{P_s(\tilde f)}(z)-M\}\\
&\ge\sum_{s=0}^p\max\{0,\nu^0_{Q_{i_s}(\tilde f)}(z)-M\}.
\end{align*}
Therefore,
\begin{align*}
\sum_{i=1}^q\nu^0_{Q_i(f)}(z)-\Delta \nu^0_{W}(z)&\le \sum_{i=1}^q\nu^0_{Q_i(f)}(z)-\Delta\sum_{s=0}^p\max\{0,\nu^0_{Q_{i_s}(\tilde f)}(z)-M\}\\
&\le \sum_{i=1}^q\nu^0_{Q_i(f)}(z)-\sum_{i=1}^q\max\{0,\nu^0_{Q_i(\tilde f)}(z)-M\}\\
&=\sum_{i=1}^q\min\{M,\nu^0_{Q_i(\tilde f)}(z)\}.
\end{align*}
This yields that
 $$\sum_{i=1}^qN_{Q_i(f)}(r)-\Delta N_{W}(r)\le \sum_{i=1}^qN^{[M]}_{Q_i(\tilde f)}(r).$$
Combining this inequality and (\ref{4}), we obtain
\begin{align*}
\|\,(q-\Delta(M+1+\epsilon))T_f(r)&\le\sum_{j=1}^q\frac{1}{ d}N^{[M]}_f(r,Q_j)+\Delta\delta \log r + O(1)\\
&\le\sum_{j=1}^q\frac{1}{\deg D_j}N^{[M]}_f(r,D_j)+\Delta\delta \log r + O(1).
\end{align*}
Hence, the theorem is proved.
\end{proof}

\begin{proof}[Proof of Theorem \ref{1.2}]
Assume that $\tilde f=(f_0,\ldots,f_n)$ and $\tilde g=(g_0,\ldots,g_n)$ are reduced representations of $f$ and $g,$ respectively. Let $Q_1,\ldots,Q_q$ be the homogeneous polynomials defining$D_1,\ldots,D_q$ respectively. Without loss of generality, we may assume that $\deg Q_j=d$ for all $1\le j\le q$.

Suppose that $f\ne g$. Then there exist two indices $s,t$ with $0\le s<t\le n$ such that $ H:=f_sg_t-f_tg_s\not\equiv 0.$
By the definition of the characteristic function and by the Jensen formula, we have
\begin{align}\label{5}
\begin{split}
N_H(r)&=\int_{0}^{2\pi}\log |f_s(re^{i\theta})g_t(re^{i\theta})-f_t(re^{i\theta})g_s(re^{i\theta})|\frac{d\theta}{2\pi}\\ 
& \le \int_{0}^{2\pi}\log \|f(re^{i\theta})\|\frac{d\theta}{2\pi} + \int_{0}^{2\pi}\log \|g(re^{i\theta})\|\frac{d\theta}{2\pi} \\
&=T_f(r)+T_g(r).
\end{split}
\end{align}

(a) By the assumption of the theorem, we have $H=0$ on $\bigcup_{i=1}^qf^{-1}(D_i)\cup g^{-1}(D_i)$. 
If $z$ is a zero of exactly $\ell$ functions $Q(\tilde f)$ then $\Delta\ge\frac{\ell}{k}$, i.e., $\ell\le\Delta k.$
Therefore, 
$$\nu^0_H(z)\ge \Delta k\sum_{i=1}^q\min\{1,\nu^0_{Q_i(f)}\}.$$
It follows that
\begin{align}\label{6}
N_H(r)\ge \Delta k\sum_{i=1}^qN^{[1]}_{Q_i(f)}(r).
\end{align}
Combining (\ref{5}) and (\ref{6}), we have
$$ T_f(r)+T_g(r) \ge \Delta k\sum_{i=1}^qN^{[1]}_{Q_i(f)}(r).$$
Similarly, we have
$$ T_f(r)+T_g(r) \ge \Delta k\sum_{i=1}^qN^{[1]}_{Q_i(g)}(r).$$
Summing both sides of the above inequalities, we get
\begin{align*}
 2(T_f(r)+T_g(r))& \ge \Delta k\left(\sum_{i=1}^qN^{[1]}_{Q_i(f)}(r)+\sum_{i=1}^qN^{[1]}_{Q_i(g)}(r)\right).
\end{align*}
This yields that
\begin{align*}
\frac{2}{\Delta k}(T_f(r)+T_g(r))&\ge\sum_{i=1}^qN^{[1]}_{Q_i(f)}(r)+\sum_{i=1}^qN^{[1]}_{Q_i(g)}(r)\\
&\ge \sum_{i=1}^q\dfrac{1}{H_V(d)-1}N^{[H_V(d)-1]}_{Q_i(f)}(r)+\sum_{i=1}^q\dfrac{1}{H_V(d)-1}N^{[H_V(d)-1]}_{Q_i(g)}(r)\\
&\ge \dfrac{d}{H_V(d)-1}(q-\Delta (H_V(d)+\epsilon))(T_f(r)+T_g(r))+o(T_f(r)+T_g(r)).
\end{align*}
Letting $r\longrightarrow +\infty$ and then letting $\epsilon\longrightarrow 0$, we get 
$$2\Delta k\ge \dfrac{d}{H_V(d)-1}(q-\Delta H_V(d)),$$
$$\text{i.e., } q\le \Delta\left(\frac{2k(H_V(d)-1)}{d}+ H_V(d)\right).$$
This is a contradiction.
Hence $f=g$.

(b) Since $H=0$ on $\bigcup_{i=1}^qf^{-1}(D_i)\cup g^{-1}(D_i)$, we have
$$\nu^0_H(z)\ge \sum_{i=1}^q\min\{1,\nu^0_{Q_i(f)}\}.$$
It follows that
$$N_H(r)\ge \sum_{i=1}^qN^{[1]}_{Q_i(f)}(r).$$
Combining (\ref{5}) and this inequality, we obtain
$$ T_f(r)+T_g(r) \ge \sum_{i=1}^qN^{[1]}_{Q_i(f)}(r)\, \text{ and }\, T_f(r)+T_g(r) \ge \sum_{i=1}^qN^{[1]}_{Q_i(g)}(r).$$
Summing-up both sides of the above two inequalities and applying Theorem \ref{1.1} we have
\begin{align*}
2(T_f(r)+T_g(r))&\ge\sum_{i=1}^qN^{[1]}_{Q_i(f)}(r)+\sum_{i=1}^qN^{[1]}_{Q_i(g)}(r)\\
&\ge \sum_{i=1}^q\dfrac{1}{H_V(d)-1}N^{[H_V(d)-1]}_{Q_i(f)}(r)+\sum_{i=1}^q\dfrac{1}{H_V(d)-1}N^{[H_V(d)-1]}_{Q_i(g)}(r)\\
&\ge \dfrac{d}{H_V(d)-1}(q-\Delta (H_V(d)+\epsilon))(T_f(r)+T_g(r))+o(T_f(r)+T_g(r)).
\end{align*}
Letting $r\longrightarrow +\infty$ and then letting $\epsilon\longrightarrow 0$, we get 
$$2 \ge \dfrac{d}{H_V(d)-1}(q-\Delta H_V(d)),$$
$$\text{i.e., } q\le \frac{2(H_V(d)-1)}{d}+\Delta H_V(d).$$
This is a contradiction.
Hence $f=g$.
\end{proof}

\section*{Data availability}
Data sharing not applicable to this article as no datasets were generated or analyzed during the current study.

\section*{Disclosure statement}
No potential conflict of interest was reported by the author(s).

\end{document}